\documentclass[10pt]{article}

\usepackage{amssymb,amsmath,amsthm}
\usepackage{graphicx}

\newtheorem{theorem}{Theorem}
\newtheorem{lemma}[theorem]{Lemma}
\newtheorem{proposition}[theorem]{Proposition}

\newtheorem{definition}[theorem]{Definition}

\newcommand\R{\mathbb{R}}
\newcommand\Z{\mathbb{Z}}
\newcommand\E{\mathbb{E}}

\newcommand\Prb{\mathbb{P}}
\newcommand\cP{{\mathcal P}}

\newcommand\Fg[1]{Figure~\ref{f:#1}}
\newcommand\Lm[1]{Lemma~\ref{l:#1}}
\newcommand\Th[1]{Theorem~\ref{t:#1}}

\begin{document}

\title{Percolation in the Secrecy Graph}
\author{Amites Sarkar%
\thanks{Department of Mathematics,
Western Washington University, Bellingham, WA 98225, USA. Email: amites.sarkar@wwu.edu}
\and Martin Haenggi%
\thanks{Department of Electrical Engineering,
University of Notre Dame, Notre Dame, IN 46556, USA. Email: mhaenggi@nd.edu}
}

\maketitle

\begin{abstract}
The secrecy graph is a random geometric graph which is intended to model the connectivity of
wireless networks under secrecy constraints. Directed edges in the graph are present whenever
a node can talk to another node securely in the presence of eavesdroppers, which, in the
model, is determined solely by the locations of the nodes and eavesdroppers. In the case of
infinite networks, a critical parameter is the maximum density of eavesdroppers that can be
accommodated while still guaranteeing an infinite component in the network, i.e., the
{\em percolation threshold}. We focus on the case where the locations of the nodes and
eavesdroppers are given by Poisson point processes, and present bounds for different types
of percolation, including in-, out- and undirected percolation.
\end{abstract}

\section{Introduction}

To assess the impact of secrecy constraints in wireless networks, we have recently introduced
a random geometric graph, the so-called {\em secrecy graph}, that represents the network or
communication graph including only links over which secure communication is possible \cite{Hae08}.

We assume that a transmitter can choose the rate such that it can communicate to any receiver
that is closer than any of the eavesdroppers. This way, the secrecy constraint translates into
a simple geometric constraint for secrecy. Natural topics for investigation include the degree
distributions and the threshold at which infinite components cease to exist. Since the resulting
graph is directed, there are different types of components, including in-, out-, and undirected
components. In each case, the percolation threshold (in terms of the density of eavesdroppers)
is different.

In this paper, we give an overview of the progress made in the last three years on the percolation
thresholds for secrecy graphs, introduce new methods, and present improved bounds for the case
where nodes and eavesdroppers form independent Poisson point processes.

\section{Model}
Our model is as follows. Let $\cP$ and $\cP'$ be independent Poisson processes, of intensities 1 and
$\lambda$ respectively, in $\R^d$. The case $d=2$ provides a good example. We will call the points of
$\cP$ {\em black points} and the points of $\cP'$ {\em red points}. Now define a directed graph, the
{\em directed secrecy graph} $\vec{G}_{\rm sec}$, on vertex set $\cP$, by sending a directed edge from
$x\in\cP$ to $y\in\cP$ if there is no point of $\cP'$ in the open ball $B(x,\|x-y\|)$ centered at $x$
with radius $\|x-y\|$. Note that it makes no difference whether we consider open or closed balls since,
with probability~1, there are no two points of $\cP\cup\cP'$ at the same distance from any point of $\cP$.

The motivation for this construction is that $x\in\cP$ can send a message to $y\in\cP$ without being
overheard by an eavesdropper from $\cP'$. For more details, see~\cite{Hae08}, where the model was
originally defined.

Our main aim in this paper is to study the critical value(s) of $\lambda$ for various types of percolation
in $\vec{G}_{\rm sec}$ in the plane (precise definitions will be given later). We will also make some
comments about the situation in higher dimensions.

Let us remark that the indegree and outdegree distributions in $\vec{G}_{\rm sec}$ have been obtained
in~\cite{PiBaWi} and~\cite{Hae08} respectively. We summarize the results below.

\begin{theorem}
The outdegree distribution in $\vec{G}_{\rm sec}$ is geometric with mean $1/\lambda$, and the indegree
$I$ has moment generating function
\[
\E(e^{tI})=\E(e^{V_d(e^t-1)/\lambda}),
\]
where $V_d$ is the random variable representing the volume of a randomly chosen cell in a Voronoi tessellation
associated with a unit intensity Poisson process in $\R^d$. Equivalently, if $f_d(t)$ is the probability
density function of $V_d$, then
\[
\Prb(I=k)=\frac{1}{k!}\int_0^{\infty}f_d(t)e^{-t/\lambda}(t/\lambda)^k\,dt.
\]
\end{theorem}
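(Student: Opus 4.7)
\bigskip
\noindent\textbf{Proof proposal.} My plan is to treat the two claims separately, applying Slivnyak's theorem (Palm theory for the Poisson process) in each case.

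For the outdegree, I would fix a typical black point $x\in\cP$. By Slivnyak, the remaining points of $\cP$ and all points of $\cP'$ still form independent Poisson processes of intensities $1$ and $\lambda$. Superimpose them to get a Poisson process of intensity $1+\lambda$ in which each point is independently coloured black with probability $p=1/(1+\lambda)$ and red with probability $1-p=\lambda/(1+\lambda)$. Order all these points by distance from $x$; the colours along this ordering are i.i.d.\ Bernoulli. A point $y\in\cP$ is an out-neighbour of $x$ iff every point closer to $x$ than $y$ is black, so the outdegree equals the number of black points preceding the first red point in the sequence. Hence it is geometric with $\Prb(\text{outdeg}=k)=p^k(1-p)$, giving mean $p/(1-p)=1/\lambda$.

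For the indegree, the key geometric observation is that $x\to y$ iff $B(x,\|x-y\|)$ misses $\cP'$, i.e.\ $y$ is the closest point of $\{y\}\cup\cP'$ to $x$. Equivalently, $x$ lies in the Voronoi cell $C(y)$ of $y$ in the Voronoi tessellation generated by $\{y\}\cup\cP'$. Thus, conditioning on a typical black point $y$, the indegree $I$ of $y$ equals the number of points of $\cP$ falling in $C(y)$. Since $\cP$ is a unit-intensity Poisson process independent of everything used to construct $C(y)$, conditional on $|C(y)|=v$ we have $I\sim\mathrm{Poisson}(v)$.

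It remains to identify the distribution of $|C(y)|$. Since $\cP'$ has intensity $\lambda$ and $y$ is independent of $\cP'$, Slivnyak's theorem applied to the process $\{y\}\cup\cP'$ shows that $C(y)$ has the distribution of a typical cell in a Poisson–Voronoi tessellation of intensity $\lambda$. By the standard scaling $x\mapsto\lambda^{1/d}x$, its volume is distributed as $V_d/\lambda$, where $V_d$ is the typical cell volume at unit intensity. Combining with the conditional Poisson distribution of $I$,
\[
\E(e^{tI})=\E\bigl[\E(e^{tI}\mid |C(y)|)\bigr]=\E\bigl[e^{|C(y)|(e^t-1)}\bigr]=\E\bigl[e^{V_d(e^t-1)/\lambda}\bigr],
\]
and similarly $\Prb(I=k)=\E\bigl[(V_d/\lambda)^k e^{-V_d/\lambda}/k!\bigr]$, which becomes the stated integral on writing the expectation against the density $f_d$. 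The only delicate point, and the one worth stating carefully, is the Slivnyak step that identifies $C(y)$ with the typical Palm cell of the intensity-$\lambda$ process; everything else is routine conditioning.
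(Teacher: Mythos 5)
Your proposal is correct and follows essentially the same route as the paper: the superposition/colouring argument for the geometric outdegree, and the identification of the indegree with the number of black points in the Voronoi cell of the typical vertex relative to $\{y\}\cup\cP'$, which is a Poisson count conditional on the cell volume. The only cosmetic differences are that you keep the original intensities and rescale the cell volume (the paper instead rescales the processes to intensities $1/\lambda$ and $1$), and you invoke the Poisson moment generating function directly where the paper expands a double series; you also make the Slivnyak step explicit where the paper leaves it implicit.
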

\begin{proof}
Fix a vertex $x\in\cP$. Label the points of $\cP\cup\cP'\setminus\{x\}=\{y_1,y_2,\ldots\}$ in order of
increasing distance from $x$. Now $x$ has outdegree $k$ if and only if the $k$ nearest points
$y_1,\ldots,y_k$ to $x$ belong to $\cP$ and $y_{k+1}\in\cP'$. The probability of this is
$\left(\tfrac{1}{1+\lambda}\right)^k\tfrac{\lambda}{1+\lambda}$. Consequently, the outdegree distribution is
geometric with mean $1/\lambda$.

For the indegree distribution, we again fix $x\in\cP$, and temporarily rescale the model so that $\cP$ and
$\cP'$ have intensities $1/\lambda$ and $1$ respectively. This does not affect either degree distribution.
The vertex $x$ has indegree $k$ if and only if there are exactly $k$ points of $\cP$ in the Voronoi cell $C$
defined by $\cP'\cup\{x\}$ containing $x$. If $C$ has volume $V$, then
\[
\Prb(C\cap\cP=k)=\tfrac{1}{k!}e^{-V/\lambda}(V/\lambda)^k.
\]
Consequently,
\[
\Prb(I=k)=\frac{1}{k!}\int_0^{\infty}f_d(s)e^{-s/\lambda}(s/\lambda)^k\,ds
\]
so that
\[
\E(I^n)=\sum_{k=0}^{\infty}\frac{k^n}{k!}\int_0^{\infty}f_d(s)e^{-s/\lambda}(s/\lambda)^k\,ds
\]
and
\begin{align*}
\E(e^{tI})&=\sum_{n=0}^{\infty}\sum_{k=0}^{\infty}\frac{t^nk^n}{n!k!}\int_0^{\infty}f_d(s)e^{-s/\lambda}(s/\lambda)^k\,ds\\
&=\int_0^{\infty}f_d(s)e^{-s/\lambda}\sum_{k=0}^{\infty}\frac{1}{k!}(s/\lambda)^ke^{kt}\,ds\\
&=\int_0^{\infty}f_d(s)e^{-s/\lambda}e^{se^{t}/\lambda}\,ds\\
&=\E(e^{V_d(e^t-1)/\lambda}).
\end{align*}
\end{proof}
Unfortunately, $f_d(t)$ is only known when $d=1$, when $f_1(t)=4te^{-2t}$. Consequently, the indegree
distribution in $\vec{G}_{\rm sec}$ remains unknown for $d\ge 2$. However, its mean is of course $1/\lambda$
in all dimensions.

\section{Percolation}

For a model of an infinite undirected random graph, {\em percolation} is said to occur if an infinite
component occurs with positive probability. (In fact, this probability is almost always 0 or 1 by ergodicity --
see \Th{01}.) Since $\vec{G}_{\rm sec}$ is a directed graph, there are several things we could mean
by ``component", which lead to several definitions of percolation. Following~\cite{BBknnp},
we distinguish five distinct events. First, write $G_{\rm sec}$ for the undirected graph obtained from
$\vec{G}_{\rm sec}$ by removing the orientations of the edges and replacing any resulting double edges
by single edges, and $G'_{\rm sec}$ for the undirected graph obtained from
$\vec{G}_{\rm sec}$ by including only those edges $xy$ for which both $\vec{xy}\in\vec{G}_{\rm sec}$ and
$\vec{yx}\in\vec{G}_{\rm sec}$. We write {\bf U} for the event that $G_{\rm sec}$ has an
infinite component, {\bf O} for the event that $\vec{G}_{\rm sec}$ has an infinite out-component,
{\bf I} for the the event that $\vec{G}_{\rm sec}$ has an infinite in-component, {\bf S} for the
event that $\vec{G}_{\rm sec}$ has an infinite strongly connected subgraph, and {\bf B} for the event
that $G'_{\rm sec}$ has an infinite component. Here, an out (resp. in)-component is a subgraph with a spanning
subtree whose edges are all directed away from (resp. towards) a root vertex, and a strongly connected
subgraph is one where there are directed paths from $x$ to $y$ for all $x$ and $y$ in the subgraph.

These types of percolation are of more than just mathematical interest. For instance, events
${\bf O},{\bf I}$ and ${\bf S}$ are relevant for broadcasting, collecting and sharing data, respectively,
and ${\bf B}$ would be relevant if the transmission protocol needed secure transmission in both directions
at each step.

As noted in~\cite{BBknnp}, we have the following implications:
\begin{equation}
{\bf B}\Rightarrow{\bf S}\Rightarrow({\bf I}{\rm \ and\ }{\bf O}),\ \ \ \ \ ({\bf I}{\rm \ or\ }{\bf O})\Rightarrow{\bf U}.
\end{equation}
Let ${\bf X}$ denote any of ${\bf U},{\bf O},{\bf I},{\bf S}$ or ${\bf B}$, and let $p_{\bf X}(\lambda,d)=\Prb({\bf X})$.
The following theorem is a consequence of the ergodicity of the Poisson process, and the fact that percolation is a
translation invariant event.

\begin{theorem}\label{t:01}
For all $\lambda,d$, and all choices of ${\bf X}$, $p_{\bf X}(\lambda,d)$ is either 0 or 1.\qed
\end{theorem}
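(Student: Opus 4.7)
The plan is to invoke the standard ergodic-theoretic 0-1 law for translation-invariant events. The three ingredients are: (i) the joint coloured process $(\cP,\cP')$ is ergodic under the translation action of $\R^d$; (ii) each of ${\bf U},{\bf O},{\bf I},{\bf S},{\bf B}$ lies in the translation-invariant $\sigma$-algebra; and (iii) any invariant event of an ergodic action has probability $0$ or $1$.

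For (i), I would use the fact that independent homogeneous Poisson processes on $\R^d$, viewed as a two-coloured point process, are mixing under every translation $\tau_v$: for cylinder events $A,B$ depending on points in bounded regions, $\Prb(A\cap \tau_v^{-1}B)\to \Prb(A)\Prb(B)$ as $\|v\|\to\infty$, which is immediate from the independence of Poisson counts on disjoint Borel sets. A standard monotone-class extension promotes this to all measurable events, and mixing implies ergodicity of the full $\R^d$-action.

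For (ii), note that the directed edge condition---that the open ball $B(x,\|x-y\|)$ contain no point of $\cP'$---depends only on relative positions of the points. Hence the secrecy graph on $(\tau_v\cP,\tau_v\cP')$ is the isometric translate by $v$ of the graph on $(\cP,\cP')$, and the same is true for $G_{\rm sec}$ and $G'_{\rm sec}$. Since the existence of an infinite in-, out-, strong, ${\bf B}$-, or undirected component is preserved under graph isometry, each of the five events coincides almost surely with its translate, so lies in the invariant $\sigma$-algebra. Applying the 0-1 law from (iii) then gives $p_{\bf X}(\lambda,d)\in\{0,1\}$. The proof presents no real obstacle; the only point worth flagging is that one needs ergodicity of the whole group $\R^d$ rather than of a single transformation, which is exactly why the mixing statement in step (i) is convenient.
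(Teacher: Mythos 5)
Your proposal is correct and is exactly the argument the paper has in mind: the theorem is stated there without proof, justified in one line as ``a consequence of the ergodicity of the Poisson process, and the fact that percolation is a translation invariant event,'' which is precisely your three-step outline (mixing of the coloured Poisson process implies ergodicity of the $\R^d$-action, the five events are translation invariant, and the 0--1 law for invariant events applies). You have simply filled in the standard details the paper omits.
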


Since, for a fixed instance of $\cP$, adding points to $\cP'$ can only remove edges from $\vec{G}_{\rm sec}$,
the probability $p_{\bf X}(\lambda,d)$ is non-increasing in $\lambda$. Define the {\em critical intensity}
$\lambda_{{\bf X},d}$ by the formula
\[
\lambda_{{\bf X},d}=\inf\{\lambda:p_{\bf X}(\lambda,d)=0\}=\sup\{\lambda:p_{\bf X}(\lambda,d)=1\}
\]
and write (just for this paper) $\lambda_{\bf X}=\lambda_{{\bf X},2}$. We reiterate that {\em increasing}
$\lambda$ {\em decreases} the probability of percolation, in our formulation of the model. From (1), we have
\begin{equation}
\lambda_{\bf B}\le\lambda_{\bf S}\le\min\{\lambda_{\bf I},\lambda_{\bf O}\},\ \ \ \ \
\max\{\lambda_{\bf I},\lambda_{\bf O}\}\le\lambda_{\bf U}.
\end{equation}

Our first aim is to provide bounds on $\lambda_{\bf X}$. While doing this, we survey various methods that
have been used for other continuum percolation models. All of these are from~\cite{Gil61},~\cite{Hal85b}
and~\cite{Pen96}, on percolation in the Gilbert disc model, and from~\cite{BBknnp} and~\cite{HaMe}, on
percolation in the $k$-nearest neighbour model.

\subsection{Branching processes (\cite{Gil61},~\cite{HaMe},~\cite{Hal85b},~\cite{Pen96})}

Let $\cP$ be a Poisson process. For fixed $r>0$, the Gilbert disc model is obtained by connecting two points
of $\mathcal{P}$ with an edge if the distance between them is less than $r$, and, for a fixed positive integer $k$,
the $k$-nearest neighbour model is obtained by connecting each point $p\in\cP$ to its $k$ {\sl nearest neighbours}:
those points of $\cP$ which are the closest, in the usual euclidean norm, to $p$.

For both the Gilbert disc model and the $k$-nearest neighbour model (the ``traditional models"), the basic
method is as follows. We start with a vertex $x$ of $\cP$, grow the cluster containing $x$ in ``generations",
and compare the growing cluster to a branching process. For the most natural way of doing this (details below),
the branching process has more points than the cluster, so, in all dimensions, if the branching process dies out,
so will the cluster. We can now use classical results which tell us when certain branching processes die out.
Consequently, in all dimensions, branching processes give lower bounds for thresholds in the traditional models,
i.e., they show that for certain parameters, percolation {\em does not} occur.

In the following, we will describe the method for the Gilbert disc model, although it is almost the same as for
the $k$-nearest neighbour model. Assume that the origin $O$ is a point of $\cP$. First pick the points of $\cP$
within distance $r$ of $O$ -- these are the first generation. The second generation are the points of $\cP$
which are each within distance $r$ of some first generation point, but are not in the first generation themselves
(i.e., they are not within distance $r$ of $O$). The third generation are the points of $\cP$ not belonging to
the first two generations, but which are each within distance $r$ of some second generation point, and so on.
The associated branching process is obtained by setting each offspring size distribution to be ${\rm Po}(\pi r^2)$,
so that we are essentially growing the same cluster containing $O$, but ignoring the fact that the various discs
we have scanned for points actually overlap. In~\cite{Gil61}, Gilbert argues that if $\pi r^2\le 1$, the branching
process dies out with probability 1, so that the critical area for percolation is at least 1. When $\pi r^2>1$,
it is possible to calculate (numerically) the probability that the branching process dies out, so this gives an
upper bound on the probability that $O$ belongs to an infinite component. Gilbert also notes the following
improvement. The discs surrounding a point of $\cP$ and its descendant in $\cP$ always intersect in an area of
at least $\alpha=(\tfrac23\pi-\tfrac{\sqrt 3}{2})r^2$,
so we can compare with a branching process whose offspring size distribution is just ${\rm Po}((\pi-\alpha)r^2)$.
This leads to the improved lower bound of $\tfrac{\pi}{\pi-\alpha}\approx 1.642$, which was further improved to
2.184 by Hall~\cite{Hal85b} using multitype branching processes. In Hall's method, the type of a child is just
the Euclidean distance to its parent: children of higher types are likely to have more descendants. We include
a brief description of Hall's modification later.

This method can be used to give an upper bound of $\lambda_{{\bf O},d}\le 1$ for the secrecy graph model.
In fact, for oriented out-percolation, we have the following result.

\begin{proposition}
The probability $\theta_{{\bf O},d}(\lambda)$  that $O$ belongs to an infinite out-component in the secrecy graph
satisfies
\[
\theta_{{\bf O},d}(\lambda)\le \max\{0,1-\lambda\}.
\]
\end{proposition}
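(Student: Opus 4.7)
The plan is to bound $\theta_{{\bf O},d}(\lambda)$ above by the survival probability of a Galton--Watson branching process whose offspring distribution matches the outdegree distribution of $\vec{G}_{\rm sec}$, in the spirit of the branching-process method described above for the Gilbert disc model.

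I would first determine the survival probability of the comparison branching process. By Theorem~1 the outdegree is geometric with parameter $\lambda/(1+\lambda)$, so its probability generating function is $f(s)=\lambda/(1+\lambda-s)$. The extinction probability is the smallest solution of $f(q)=q$ in $[0,1]$; the equation $q(1+\lambda-q)=\lambda$ factors as $(q-1)(q-\lambda)=0$, giving $q=\min\{1,\lambda\}$ and hence survival probability $\max\{0,1-\lambda\}$.

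Next I would grow the out-component of $O$ generation by generation and couple it with this branching process so that the cluster is dominated at every generation. The first generation is $\cP\cap B(O,r_O)$, where $r_O=\mathrm{dist}(O,\cP')$; inductively, a frontier vertex $v$ contributes to the next generation those points of $\cP\cap B(v,r_v)$ not yet discovered. Assigning each vertex an independent geometric$(\lambda/(1+\lambda))$ offspring ignores the overlaps of the balls $B(v,r_v)$ across different cluster vertices --- a point claimed as offspring by two parents is counted twice in the tree but only once in the cluster --- so the tree dominates the cluster and $\{\text{cluster infinite}\}\subseteq\{\text{BP survives}\}$, giving the stated bound.

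The main obstacle is turning this heuristic into a rigorous stochastic domination. In the Gilbert model each scanning disc has a \emph{fixed} radius, so offspring counts from distinct vertices are straightforward samples from the underlying Poisson process. Here the radii $r_v$ are themselves random and, conditional on the exploration history, stochastically \emph{larger} than their unconditional values, because the already-scanned region has been shown to contain no points of $\cP'$; thus the raw out-degree of $v$ conditional on history need not be dominated by a fresh geometric variable. The resolution is that the new children of $v$ lie in $B(v,r_v)$ \emph{outside} the explored region, and the spatial Markov property of the Poisson processes on this unexplored portion, together with a careful order of revelation, allows the new-child count to be coupled with an independent geometric$(\lambda/(1+\lambda))$ upper bound. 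Executing this coupling carefully is the principal technical step; once it is in place, the bound $\theta_{{\bf O},d}(\lambda)\le\max\{0,1-\lambda\}$ follows from the survival-probability computation above.
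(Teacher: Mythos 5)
Your proposal is correct and follows essentially the same route as the paper: compare the exploration of the out-component with a Galton--Watson process whose offspring law is the geometric outdegree distribution, compute its extinction probability $\min\{1,\lambda\}$ from $f(s)=\lambda/(1+\lambda-s)$, and argue that the number of \emph{new} children of a frontier vertex --- black points outside the already-scanned region captured before the first red point --- is dominated by a fresh geometric variable with mean $1/\lambda$ by the spatial Markov property. The subtlety you flag (the conditional enlargement of the scanning radius, and the possibility of hitting a previously discovered red point) is exactly the point the paper's proof addresses, and at a comparable level of detail.
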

\begin{proof}
As in the above proof sketch, we compare the growing cluster, starting at a black point $p\in\cP$, with a branching
process. The number of children in the first generation has distribution given by a geometric random variable with
mean $1/\lambda$. After the $n^{\rm th}$ generation has been completed, we order the points of the $n^{\rm th}$
generation in order of distance from $p$, and begin growing a ball around each point in turn (according to the order).
For each black point $x$, there are two possibilities. First, the ball corresponding to $x$ might encounter a red
point which has already been encountered. If not, the ball will certainly outgrow the region $R$ already scanned
(by points in previous generations, or the current generation). In this case, the number of black points {\em
outside the region} $R$ that we encounter before the first red point (which stops the ball) will again have a
geometric distribution with mean $1/\lambda$. Consequently, the number of children of a black point is always
stochastically dominated by a geometric random variable with mean $1/\lambda$, and generating function
$f(x)=\tfrac{\lambda}{1+\lambda-x}$. A branching process whose offspring size distribution is
given by this geometric random variable has extinction probability 1 if $\lambda\ge 1$, and extinction
probability $\lambda$ if $\lambda\le 1$. (When $\lambda<1$, the extinction probability is given by the
smallest root of $x=f(x)$.) Consequently, the cluster stops growing with probability at least $\lambda$,
and so $\theta_{{\bf O},d}(\lambda)\le 1-\lambda$.
\end{proof}

In higher dimensions, the cluster is approximated better and better by the appropriate branching process,
at least for the Gilbert and $k$-nearest neighbour models. This is because the distances from a point $p\in\cP$
to its two nearest neighbours in $\cP$ converge in distribution to a (common) deterministic limit, and because the
overlap between the balls centered at a parent and at its child gets smaller and smaller, as $d\to\infty$. There is
a slight complication in that the error (between the model and a branching process) is only asymptotically
negligible over finitely many generations. Therefore, in both~\cite{HaMe} and~\cite{Pen96}, oriented lattice
percolation is brought in to establish asymptotic thresholds for percolation. The results are that in sufficiently
high dimension, $k=2$ gives percolation for the $k$-nearest neighbour model, and that the critical volume in
the Gilbert model tends to 1 as $d\to\infty$.

For the secrecy graph, we have

\begin{theorem}\label{t:pain}
If $\lambda\ge 1$, then, for all $d$, $\theta_{{\bf O},d}(\lambda)=0$. If $\lambda<1$, then
$\theta_{{\bf O},d}(\lambda)\to1-\lambda$ as $d\to\infty$.
\end{theorem}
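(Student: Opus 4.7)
The first assertion is immediate from the Proposition above: when $\lambda\ge 1$ the bound $\theta_{{\bf O},d}(\lambda)\le\max\{0,1-\lambda\}$ already equals $0$. For the second assertion, that same Proposition supplies the matching upper bound $\theta_{{\bf O},d}(\lambda)\le 1-\lambda$ for every $d$, so what remains is the lower bound $\liminf_{d\to\infty}\theta_{{\bf O},d}(\lambda)\ge 1-\lambda$ when $\lambda<1$. The plan is to show that the coupling used in the proof of the Proposition becomes asymptotically sharp: the only source of slack, namely the possibility that a newly scanned ball is killed by an \emph{already seen} red point rather than by a fresh one, has vanishing probability as $d\to\infty$.

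I would grow the out-cluster $\cC$ rooted at $O$ generation by generation exactly as in the proof of the Proposition, scanning a ball of increasing radius around each newly added black point. Let $R_x$ denote the region already scanned before $x$ is processed, and let $B_x$ be the event that the growing ball around $x$ meets a red point of $\cP'\cap R_x$ before it leaves $R_x$. Off of $B_x$ the scan exits $R_x$ and continues into fresh territory, where the number of new children of $x$ is an independent ${\rm Geom}(1/\lambda)$ variable; hence, on the event that no $B_x$ occurs in the first $N$ generations, $\cC$ contains the first $N$ generations of a Galton--Watson process $\cT$ with ${\rm Geom}(1/\lambda)$ offspring distribution. Since $\lambda<1$, $\cT$ is supercritical, survives forever with probability $1-\lambda$, and dies by generation $N$ with probability $q_N\downarrow\lambda$. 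The geometric input is the classical fact that, for fixed $\eta>0$, two Euclidean balls in $\R^d$ of comparable radii whose centres are separated by at least $\eta$ times the smaller radius intersect in a set whose volume is exponentially smaller (in $d$) than either ball's volume. Applied here, it implies that, conditionally on the configuration of $R_x$, the position of $x$ (uniform in its parent's ball) concentrates close to the boundary of $R_x$ while each already-seen red point sits at macroscopic distance, so $\Prb(B_x\mid\text{history})=O(c^d)$ for some $c<1$. A union bound over the almost surely finite first $N$ generations of $\cT$ gives $\Prb(\bigcup_x B_x)\to 0$ as $d\to\infty$, and consequently $\liminf_{d\to\infty}\Prb(\cC\text{ reaches generation }N)\ge 1-q_N$ for every fixed $N$.

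The main obstacle is upgrading ``survival up to generation $N$ with probability close to $1-\lambda$'' to actual percolation, since the union bound above deteriorates as the expected number of scans grows unboundedly. To close the gap I would follow the renormalisation strategy used in~\cite{Pen96} and~\cite{HaMe} for the traditional models: partition the growth of $\cC$ into epochs of $N$ generations, regard the surviving descendants at the end of each epoch as seeds for the next, and show that the scans originating from distinct seeds are, with probability $1-o(1)$ as $d\to\infty$, spatially separated enough that their $N$-generation continuations are asymptotically independent and each has individual survival probability at least $1-q_N-\eps$. The resulting seed process dominates a supercritical branching (or oriented lattice) process whose survival probability tends to $1-\lambda$ as $N\to\infty$ and $d\to\infty$. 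Choosing $N$ first and then $d$ large then gives $\theta_{{\bf O},d}(\lambda)\ge 1-\lambda-\eps$ for any prescribed $\eps>0$, which combined with the Proposition's upper bound yields $\theta_{{\bf O},d}(\lambda)\to 1-\lambda$ as claimed.
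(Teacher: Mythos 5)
Your proposal follows essentially the same route as the paper: the first claim and the upper bound come directly from the Proposition, your finite-generation coupling of the scanned cluster with a supercritical ${\rm Geom}(1/\lambda)$ Galton--Watson tree is the content of Lemma~\ref{l:xapproxy}, and your renormalisation into $N$-generation epochs with seeds compared against an oriented lattice process is exactly the Penrose/H\"aggstr\"om--Meester strategy the paper implements. The one device you leave implicit is the projection $L(x)=\sqrt{d}\,(x_1,x_2)$ onto $\R^2$, which is how the paper makes your ``seeds are spatially separated and their continuations asymptotically independent'' step precise: the renormalised sites become squares of a fixed two-dimensional oriented lattice, and the Gaussian limit of the projected offsets (Lemmas~\ref{l:proj}--\ref{l:far}) controls both the openness probabilities and the interference from distant, previously explored blocks.
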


\noindent The first part of the theorem follows from the above proposition, so we assume from now on that $\lambda<1$.

We will prove this theorem in a series of steps, and we will utilize six different branching random walks. The first
is the process $({\bf X^d_n})$. We define ${\bf X^d_0}$ to be the single point at the origin in $\R^d$, which we
will suppose belongs to $\cP$. ${\bf X^d_1}$ is the set of points of $\cP$ that are closer to ${\bf X^d_0}$ than any point
of $\cP'$, ordered according to modulus. Thus the points in ${\bf X^d_1}$ are the out-neighbours of ${\bf X^d_0}$ in
$\vec{G}_{\rm sec}$. We generate the set ${\bf X^d_2}$ by examining the points of ${\bf X^d_1}$ in order, and growing a ball
around each one, capturing black points until the first red point is encountered. We call this {\em scanning} around the
points of ${\bf X^d_1}$. After we have  scanned around each point of
${\bf X^d_1}$, the newly-captured black points (i.e., those not in ${\bf X^d_0}\cup{\bf X^d_1}$) form ${\bf X^d_2}$.
Thus ${\bf X^d_2}$ is the set of out-neighbours of the points of ${\bf X^d_1}$ in $\vec{G}_{\rm sec}$ that are not
out-neighbours of ${\bf X^d_0}$. This time, we order the points of ${\bf X^d_2}$ according to the order in which they
were captured, i.e., they inherit the order of their parents in ${\bf X^d_1}$, and, within sibling groups, they are ordered
by distance to the parent. The set ${\bf X^d_3}$, of not-already encountered out-neighbours of ${\bf X^d_2}$, is generated
in the same way, and the same ordering is imposed upon its members. Continuing in this manner we obtain
$({\bf X^d_n})$. Of course, it is entirely possible that this process terminates after a finite number of steps.

As we have already remarked, as $d\to\infty$, this process more and more resembles the following one. We set ${\bf Y^d_0}$ to
be the single point at the origin in $\R^d$, as before. The set ${\bf Y^d_1}$ is the set of out-neighbours of ${\bf Y^d_0}$ in
$\vec{G}_{\rm sec}$, again as before. However, to generate ${\bf Y^d_2}$, we use a different procedure. Examining the points of
${\bf Y^d_1}$ in order of modulus, for each point, we generate entirely fresh copies of $\cP$ and $\cP'$, and for each point
$y\in{\bf Y^d_1}$, the children of $y$ in ${\bf Y^d_2}$ are the out-neighbours of $y$ in this new copy of $\vec{G}_{\rm sec}$,
once again ordered by distance to the parent. We continue in this manner to obtain $({\bf Y^d_n})$: each time we
scan around a new point, we use a fresh copy of $\cP$ and $\cP'$, and the ordering on the points within each generation
is as before. This process might also terminate after a finite number of steps.

This process can be coupled with the previous one: to get an instance of a subtree of $\vec{G}_{\rm sec}$ from an
instance of $({\bf Y^d_n})$, we simply throw away some of the black points, along with their descendants.
There are two types
of black point which need to be discarded. Firstly, any black point among the process $({\bf Y^d_n})$ which was
born inside a previously scanned region must be excluded from $({\bf X^d_n})$. Secondly, while scanning about a
point $y\in({\bf Y^d_n})$, we stop when we hit the first red point of the new instance of $\cP'$ we are
using. However, we might encounter an old red point, from the original instance of $\cP'$, first. For the sake of
generating $({\bf X^d_n})$, this is where the scanning around $y$ must stop. Hence we must discard all black points captured
after this old red point was encountered. Owing to the existence of fresh red points in already-scanned regions, we might
actually never obtain some points of the original process $({\bf X^d_n})$, but the new set of points will certainly
be a subset (if not a subtree) of $({\bf X^d_n})$.

One thing is clear, however: if, in, say, the first $k$ generations of $({\bf Y^d_n})$, no point (either black or red)
is born inside a previously scanned region, and if no previously encountered red points are encountered during the scanning,
then the processes $({\bf X^d_n})$ and $({\bf Y^d_n})$ will coincide for the first $k$ generations. We will in fact show
that, for fixed $k$, the probability of this tends to 1 as $d\to\infty$. First, however, let us remark that the distribution
of generation sizes in the process $({\bf Y^d_n})$ is known completely. For this, the spatial locations of the points of
$({\bf Y^d_n})$ are irrelevant: all that matters is that the individuals in $({\bf Y^d_n})$ form a branching process,
whose offspring distribution is geometric with mean $\mu=1/\lambda>1$. Consequently (see, for instance~\cite{Wil91}),
\begin{equation}\label{geom}
\Prb(|{\bf Y^d_n}|=j)=
\begin{cases}
\frac{\mu^n-1}{\mu^{n+1}-1}&\text{if $j=0$}\\
\frac{\mu^n(\mu-1)^2}{(\mu^{n+1}-1)^2}\left(\frac{\mu^{n+1}-\mu}{\mu^{n+1}-1}\right)^{j-1}\sim\frac{\mu^n(\mu-1)^2}{(\mu^{n+1}-1)^2}
\exp\left(-\frac{j(1-\lambda)}{\mu^n}\right)&\text{if $j\ge 1$}.\\
\end{cases}
\end{equation}
(Here, the asymptotics are as $n\to\infty$, with $\mu$ and $j$ fixed.)
The expected size of the $n^{\rm th}$ generation is $\mu^n$, and its mass function is geometric, except for the first term.
Moreover, the extinction probability is $\lambda=1/\mu$, corresponding to the percolation probability $1-\lambda$. The idea
of the rest of the argument is that we can essentially let $k\to\infty$ in the preceding discussion, even though, for any
fixed $d$, the processes $({\bf X^d_n})$ and $({\bf Y^d_n})$ will eventually differ with probability 1.

To compare the processes $({\bf X^d_n})$ and $({\bf Y^d_n})$ over the first $k$ generations, we will use the following
well-known lemmas. To simplify their statements, we will, following~\cite{HaMe} and~\cite{Pen96}, scale the processes
$\cP$ and $\cP'$ so that they have intensities $1/\alpha_d$ and $\lambda/\alpha_d$ respectively, where
$\alpha_d=\pi^{d/2}/\Gamma(1+d/2)$ is the volume of a unit $d$ dimensional ball. This doesn't affect the graph
$\vec{G}_{\rm sec}$.

\begin{lemma}\label{l:dist}
Let $d_i$, for $1\le i\le t$, be the distance of the $i^{\rm th}$ nearest point of $\cP$ to the origin in $\R^d$.
Then, as $d\to\infty$, $d_i\to1$ in probability.\qed
\end{lemma}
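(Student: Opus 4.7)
The plan is a direct computation using the fact that, under the stated rescaling, the number of points of $\cP$ in the ball $B(O,r)$ is Poisson with mean $r^d$, since the ball has volume $\alpha_d r^d$ and the intensity is $1/\alpha_d$. Let $N(r)$ denote this count. The lemma then reduces to controlling $N(r)$ for radii slightly less than and slightly greater than $1$, exploiting the fact that the map $r\mapsto r^d$ is extremely sharp near $r=1$ when $d$ is large.

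Fix $\eps>0$ and a positive integer $i$. For the lower tail, I would use that $\{d_i<1-\eps\}\subseteq\{N(1-\eps)\ge 1\}$ (any of the $i$ nearest points lying inside $B(O,1-\eps)$ forces at least one point there), so
\[
\Prb(d_i<1-\eps)\le \Prb(N(1-\eps)\ge 1)=1-e^{-(1-\eps)^d}\to 0
\]
as $d\to\infty$, since $(1-\eps)^d\to 0$. For the upper tail, $\{d_i>1+\eps\}=\{N(1+\eps)\le i-1\}$, and since $N(1+\eps)$ is Poisson with mean $(1+\eps)^d\to\infty$, we have
\[
\Prb(d_i>1+\eps)=\sum_{j=0}^{i-1}e^{-(1+\eps)^d}\frac{(1+\eps)^{jd}}{j!}\to 0,
\]
the right-hand side being a fixed finite sum of terms each tending to $0$ (alternatively, a one-line Chebyshev bound on the Poisson $N(1+\eps)$ would suffice).

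Combining the two bounds yields $\Prb(|d_i-1|\ge\eps)\to 0$, which is exactly convergence of $d_i$ to $1$ in probability. There is no real obstacle; the entire content of the statement is the observation that rescaling by $\alpha_d$ turns the ball-volume function into $r^d$, whose steepness at $r=1$ forces concentration of every order statistic $d_i$ (with $i$ fixed) at that value. The only thing to watch is that $i$ is held fixed as $d\to\infty$; the argument does not give a uniform statement over $i$, but the lemma does not claim one.
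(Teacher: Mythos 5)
Your proof is correct. The paper states this lemma as ``well-known'' and omits the proof entirely (hence the \verb|\qed| in the statement), but your argument --- that under the rescaling to intensity $1/\alpha_d$ the count $N(r)$ in $B(O,r)$ is Poisson with mean $r^d$, so that $\Prb(N(1-\eps)\ge 1)=1-e^{-(1-\eps)^d}\to 0$ and $\Prb(N(1+\eps)\le i-1)\to 0$ for fixed $i$ --- is exactly the standard computation the authors (and the cited references of H\"aggstr\"om--Meester and Penrose) have in mind, and it correctly fills in the omitted details, including the observation that $i$ must be held fixed.
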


\begin{lemma}\label{l:intersect}
Let $B_1$ and $B_2$ be balls in $\R^d$ of radii $r_1,r_2\in(0.9,1.1)$. Suppose that the centers of the $B_i$ are
at least 0.9 units apart. Then, as $d\to\infty$, the proportion of the volume of $B_1$ which lies inside $B_2$
tends to zero.\qed
\end{lemma}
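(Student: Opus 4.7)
The plan is to avoid explicit cap computations and instead use high-dimensional concentration of measure. If $c \geq r_1+r_2$, where $c$ is the distance between the centres, then $B_1\cap B_2=\emptyset$ and there is nothing to prove, so assume $0.9\le c<r_1+r_2\le 2.2$. Let $X$ be a uniform random point in $B_1$; the ratio of volumes in the lemma is exactly $\Prb(X\in B_2)$, and I would show this tends to $0$ as $d\to\infty$.

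After translating and rotating so that $B_1=B(0,r_1)$ and $B_2=B(c\mathbf{e}_1,r_2)$, I expand
\[
|X-c\mathbf{e}_1|^2 = |X|^2 - 2cX_1 + c^2
\]
and invoke the classical fact that in high dimension the uniform measure on a ball concentrates near its boundary and is almost orthogonal to any fixed direction. Writing $|X|=r_1U^{1/d}$ with $U$ uniform on $[0,1]$ gives $\E[|X|^2]=r_1^2 d/(d+2)$ with variance $O(1/d^2)$, while spherical symmetry of $X/|X|$ gives $\E[X_1]=0$ and $\E[X_1^2]=\E[|X|^2]/d\to 0$. Chebyshev's inequality then yields $|X|^2\to r_1^2$ and $X_1\to 0$ in probability as $d\to\infty$, so
\[
|X-c\mathbf{e}_1|^2 \;\longrightarrow\; r_1^2+c^2 \qquad \text{in probability.}
\]

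It follows that $\Prb(X\in B_2)=\Prb(|X-c\mathbf{e}_1|^2\le r_2^2)\to 0$ provided $r_1^2+c^2>r_2^2$, i.e.\ $c^2>r_2^2-r_1^2$. This is the one inequality that actually has to be checked, and the main (if minor) obstacle is that $r_2>r_1$ is allowed, so one cannot simply dismiss it with a trivial bound. The given constraints handle it cleanly: $c^2\ge 0.81$ while $r_2^2-r_1^2\le 1.21-0.81=0.4$, giving a gap of at least $0.41$. Combining with the Chebyshev estimates above, $\Prb(|X-c\mathbf{e}_1|^2\le r_1^2+c^2-0.41)\to 0$, which completes the proof.
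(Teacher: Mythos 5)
Your proof is correct. Note that the paper itself gives no proof of this lemma --- it is stated with a \qed as a ``well-known'' fact, with the reader referred to \cite{HaMe} and \cite{Pen96} --- so there is no in-paper argument to compare against; the standard route in those sources is geometric, bounding $B_1\cap B_2$ by spherical caps cut off by the radical hyperplane $2cx_1=c^2+r_1^2-r_2^2$ and using the fact that a cap at height bounded away from the center has relative volume decaying like $(1-h^2/r^2)^{d/2}$. Your probabilistic version replaces the cap-volume asymptotics with second-moment concentration of a uniform point: the computations $\E|X|^2=r_1^2d/(d+2)$, $\Var(|X|^2)=O(d^{-2})$, $\E X_1^2=\E|X|^2/d$ are all correct, and you correctly isolate and verify the one inequality that makes the lemma true, namely $c^2\ge 0.81>0.40\ge r_2^2-r_1^2$ (this is exactly what fails if the hypotheses allowed $B_1$ to sit deep inside a larger $B_2$). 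Two minor points you could make explicit: since the radii and the center distance may vary with $d$, one should observe that your Chebyshev bounds are uniform over $r_1,r_2\in(0.9,1.1)$ and $c\in[0.9,2.2]$, which they visibly are; and your method yields only an $o(1)$ bound rather than the exponential decay the cap argument gives, but nothing in the paper's application (Lemma 5) requires a rate. As it stands the argument is complete and, if anything, cleaner than the classical computation.
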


\noindent We apply these lemmas to establish the following fact, which will be central to all that follows.

\begin{lemma}\label{l:xapproxy}
Fix $\epsilon>0$ and $k\ge 1$. If $d\ge d_0(\epsilon,k)$, then the probability that $({\bf X^d_n})$ and $({\bf Y^d_n})$
differ in the first $k$ generations is less than $\epsilon$.\qed
\end{lemma}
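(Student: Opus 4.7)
My first step is to use the explicit generation-size distribution \eqref{geom}---which does not depend on $d$---to choose $M=M(\epsilon,k)$ so that the total number $N_k$ of points produced in the first $k$ generations of $({\bf Y^d_n})$ satisfies $\Prb(N_k>M)<\epsilon/3$. For the rest of the argument I condition on the event $\{N_k\le M\}$, so that at most $M$ scanning operations are at issue in the first $k$ generations.

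Second, I would set up a ``nice configuration'' event on which, for some small $\delta>0$ to be fixed later, every scanning radius lies in $(1-\delta,1+\delta)$, every parent-to-child distance lies in $(1-\delta,1+\delta)$, and every pair of scanning centres that is not a parent-child pair lies at mutual distance at least $\sqrt{2}(1-\delta)$. The first two bounds follow from \Lm{dist} applied to $\cP'$ (so that the distance from a centre to its nearest red point concentrates on $\lambda^{-1/d}\to1$) and to $\cP$ respectively. The third is the standard high-dimensional orthogonality phenomenon: two children of a common parent lie at distance $\approx1$ from the parent in nearly orthogonal directions, so their mutual distance concentrates on $\sqrt{2}$, and the argument propagates inductively through the tree. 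A union bound over the $O(M^2)$ relevant distances gives that this nice event has probability at least $1-\epsilon/3$ once $d\ge d_0(\epsilon,k)$.

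Third, on the nice event, \Lm{intersect} (or the same cap estimate applied with the tighter constants from the preceding step) yields $\mathrm{Vol}(B_i\cap B_j)/\alpha_d\to0$ as $d\to\infty$ for every pair of scanning balls. Consequently the expected number of fresh Poisson points---at combined intensity $(1+\lambda)/\alpha_d$---landing in the intersection of any new scanning ball with any earlier one is $o(1)$ per pair, and summing over the at most $M^2$ pairs still gives $o(1)$. The paragraph preceding the lemma records that $({\bf X^d_n})$ and $({\bf Y^d_n})$ coincide in the first $k$ generations as long as no fresh point is born inside a previously scanned region and no already-encountered red point is met during a new scanning; both failure modes are absorbed into the event just bounded. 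Markov's inequality delivers a third $\epsilon/3$, and summing the three contributions gives the required bound.

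\textbf{Main obstacle.} The quantitatively delicate step is the intersection bound in the final paragraph. A scanning ball of radius $r\in(1-\delta,1+\delta)$ has expected Poisson content $r^d$, which can grow as $(1+\delta)^d$. The lens-fraction decay provided by \Lm{intersect} under its stated constants (radii in $(0.9,1.1)$, centres at distance $\ge0.9$) is only of order $0.913^d$, not enough to beat $1.1^d$; one must therefore rerun the same high-dimensional cap estimate with $\delta$ chosen small and centre-to-centre distances bounded more tightly below the sum of radii, so the lens fraction decays faster than $(1+\delta)^d$ grows. A cleaner workaround is to condition on the $d$-th power of the scanning radius, which is exponentially distributed with parameter $\lambda$ independently of $d$, removing the $(1+\delta)^d$ factor entirely.
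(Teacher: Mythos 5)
Your argument is essentially the paper's own proof: the paper likewise first truncates to at most $N$ points in the first $k$ generations using the fact that the offspring law of $({\bf Y^d_n})$ does not depend on $d$, and then invokes \Lm{dist} and \Lm{intersect} to make each of the at most $N$ ``forbidden'' events (a fresh black or red point born in an already-scanned region, or a previously discovered red point re-encountered) have conditional probability at most $\epsilon/4N$ once $d$ is large. The quantitative obstacle you flag is genuine and is glossed over in the paper's sketch---\Lm{intersect} only controls the \emph{relative} lens volume, whereas what is needed is that the expected Poisson content of the lens (namely $r^d$ times the lens fraction) vanishes---and your proposed fix, conditioning on $r^d\sim\mathrm{Exp}(\lambda)$ (or shrinking the radius window so the cap estimate beats $(1+\delta)^d$), is exactly the right way to repair it.
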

\begin{proof}
Fix $y\in{\bf Y^d_n}$. Firstly, by \Lm{dist}, all the
children $y_1,\ldots,y_t$ of $y$ lie at distance approximately 1 from $y$, as $d\to\infty$. Secondly, by \Lm{intersect},
the $y_i$ are at distance more than 1 from each other, and from the nearest red point $z$ to $y$ (which is also at
distance about 1 from $y$). Write $B=B(y,||z-y||)$, so that $B$ is the ball generated about $y$ while scanning for
children. Now, while scanning around the children $y_i$ of $y$, we generate certain balls $B_i$ of radius approximately 1,
centered at the $y_i$, which are stopped by red points $z_i$. The balls $B_i$ will intersect each other, and naturally
they will all intersect $B$. However, again by \Lm{intersect}, the volumes of all these intersections will be negligible
compared to the volumes of the balls themselves. Consequently, the $y_i$ are very likely to have disjoint sets of children,
all born outside $B$, and each of the balls $B_i$ will be stopped by a different point $z_i\not=z$, which will also lie
outside $B$. Now, since the offspring distribution of ${\bf Y^d_n}$ is independent of $d$, the probability of having more
than $N$ points in the first $k$ generations of ${\bf Y^d_n}$ can be made less than $\epsilon/2$ by taking $N$ sufficiently
large (depending on $k$ and $\epsilon$ but not on $d$). For fixed $\epsilon$ and $k$, we choose such an $N$, and repeat
the above argument for $k$ generations. In this process, with probability at least $1-\epsilon/2$, there will be at most
$N$ opportunities for red or black points to be born within the ``forbidden" intersections, and at most $N$ opportunities
to encounter previously discovered red points while scanning. In this case, conditioning on the offspring sizes (but not
locations) in the first $k$ generations of ${\bf Y^d_n}$, the probability of each of these events can be made less than
$\epsilon/4N$ by taking $d$ sufficiently large.
\end{proof}

Incidentally, the edge between a child and its parent will be almost orthogonal to each of the edges joining the same
child to its own children, so the points of ${\bf Y^d_k}$ will all lie at about distance about $\sqrt{k}$ from ${\bf Y^d_0}$,
when $d$ is large.

The next step is to project the points of $({\bf Y^d_n})$ onto $\R^2$ using the map $L:\R^d\to\R^2$ defined by
\[
L(x_1,\ldots,x_d)=\sqrt{d}(x_1,x_2).
\]
The reason for the factor $\sqrt{d}$ is the following lemma, taken from~\cite{HaMe}.

\begin{lemma}\label{l:proj}
Suppose ${\bf Y}$ is uniformly distributed on the surface of the ball of radius 1 in $\R^d$. Then, as $d\to\infty$,
the random variable
${\bf Z}=L({\bf Y})$ converges in distribution to the bivariate normal distribution $N(0,I)$ with mean zero and covariance
matrix equal to the $2\times 2$ identity matrix $I$.
\end{lemma}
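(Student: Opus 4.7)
The plan is to realize the uniform distribution on the unit sphere via normalized Gaussians, after which the lemma reduces to a one-line application of the law of large numbers and Slutsky's theorem. Let $G=(G_1,\ldots,G_d)$ be a vector of independent standard normal random variables in $\R^d$. Since the joint density of $G$ depends only on $\|G\|$, the law of $G$ is invariant under orthogonal transformations, so $G/\|G\|$ is a rotation-invariant random variable on the unit sphere. By uniqueness of the rotation-invariant probability measure on the sphere, $G/\|G\|$ has the same distribution as ${\bf Y}$, so I may assume
\[
{\bf Y}=\frac{G}{\|G\|},\qquad {\bf Z}=L({\bf Y})=\frac{\sqrt{d}}{\|G\|}\,(G_1,G_2).
\]

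The second step is a standard concentration argument. The squared norm $\|G\|^2=\sum_{i=1}^d G_i^2$ is a sum of $d$ i.i.d.\ variables with mean $1$ and finite variance, so by the weak law of large numbers $\|G\|^2/d\to 1$ in probability, and hence $\sqrt{d}/\|G\|\to 1$ in probability. On the other hand, $(G_1,G_2)\sim N(0,I)$ for every $d\ge 2$, and this distribution does not depend on $d$. Slutsky's theorem then gives ${\bf Z}\Rightarrow N(0,I)$ as $d\to\infty$, which is the claim.

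There is essentially no obstacle to overcome here: the lemma is a textbook statement about the ``thin shell'' behavior of the uniform measure on high-dimensional spheres. The only small point requiring care is the Gaussian representation of ${\bf Y}$, but this is immediate from the rotational symmetry of the isotropic Gaussian together with the uniqueness of Haar measure on $S^{d-1}$; no quantitative control on the rate of convergence is needed, since the application in \Lm{xapproxy} only requires convergence in probability of distances for finitely many points at a time.
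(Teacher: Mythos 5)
Your proof is correct and follows essentially the same route as the paper's: both realize the uniform distribution on the sphere via a normalized standard Gaussian vector and use concentration of $\|G\|^2/d$ to conclude. Your explicit use of the exact identity ${\bf Y}\stackrel{d}{=}G/\|G\|$ together with Slutsky's theorem is in fact a slightly cleaner way to finish than the paper's sketch, but it is the same argument.
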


\noindent {\bf Remark.} Indeed, the density function of ${\bf Z}$ converges pointwise to
\[
f(z_1,z_2)=\frac{1}{2\pi}\exp\left(-\frac{z_1^2+z_2^2}{2}\right)
\]
as $d\to\infty$.

\begin{proof}
The proof of an almost identical statement appears in~\cite{Pen96}, and the result is well-known, but we sketch the proof
nonetheless. If $X_1,X_2,\ldots,X_d$ are independent $N(0,1)$ random variables,
then the $d$-dimensional random vector ${\bf X}=(X_1,X_2,\ldots,X_d)\in\R^d$ has density function
\[
f_d(x_1,x_2,\ldots,x_d)=\frac{1}{(2\pi)^{d/2}}\exp\left(-\frac{x_1^2+x_2^2+\cdots+x_d^2}{2}\right),
\]
which is radially symmetric. Moreover, using Chebyshev's inequality, we see that $\tfrac{1}{d}|{\bf X}|^2=\tfrac{1}{d}(X_1^2+\cdots+X^2_d)$
converges in probability to $1$ as $d\to\infty$, and so $\frac{1}{\sqrt{d}}{\bf X}$ converges in distribution to ${\bf Y}$.
Consequently, the distribution of the first two coordinates of $\sqrt{d}{\bf Y}$ converges (in distribution) to that of $(X_1,X_2)$,
as stated in the lemma.
\end{proof}

Write $({\bf \tilde{Y}_n})$ for the result of projecting the process $({\bf Y^d_n})$ from $\R^d$ to $\R^2$ using the map $L$,
and write $({\bf \tilde{Y}^{\infty}_n})$ for the process in which the offspring size distribution agrees with that of $({\bf \tilde{Y}_n})$ and
$({\bf Y^d_n})$ (i.e., is geometric with mean $1/\lambda$), but where the offsets of each child are independent $N(0,I)$
random variables. The preceding lemma shows that the processes $({\bf \tilde{Y}_n})$ and $({\bf \tilde{Y}^{\infty}_n})$ resemble each other
more and more as $d\to\infty$. Consequently, we will study the process $({\bf \tilde{Y}^{\infty}_n})$ first, and draw conclusions
about the other processes later.

Rather than consider the entire process $({\bf \tilde{Y}^{\infty}_n)}$, we will use a ``truncated" version, and compare with
oriented site percolation on the lattice $\Lambda=\{(i,j)\in\Z^2:i\ge 0,|j|\le i,i+j\in 2\Z\}$, with oriented edges from
$(i,j)$ to $(i+1,j\pm 1)$. Each site $(i,j)$ of $\Lambda$ will correspond to a square
\[
S_{i,j}=[M(i-1/2),M(i+1/2)]\times[M(j-1/2),M(j+1/2)]
\]
in $\R^2$, where $M$ is a large integer which we will choose later. Since the oriented percolation probability is
left-continuous at 1 (see~\cite{Dur}, for instance), we may choose $\delta>0$ such that, for oriented site
percolation on $\Lambda$ with parameter $p\ge 1-3\delta$, the oriented percolation probability (of the event that
there is an infinite directed path starting from the origin) is greater than $1-\epsilon/2$.

A site $(i,j)$ in the oriented percolation process will be deemed ${\bf \tilde{Y}^{\infty}-}$open if we can proceed to both $(i+1,j-1)$ and
$(i+1,j+1)$ from it. However, ``proceed" will mean different things in the cases $(i,j)=(0,0)$ and $(i,j)\not=(0,0)$. Assume, as before,
that the point ${\bf \tilde{Y}^{\infty}_0}$ lies at the origin. The site $(0,0)$ will be ${\bf \tilde{Y}^{\infty}-}$open if and only if
${\bf \tilde{Y}^{\infty}_0}$ has at least $m$ descendants in generation $k$ within the square $S_{1,-1}$, and at least $m$ descendants,
also in generation $k$, within the square $S_{1,1}$. We will only test a subsequent site $(i,j)$ for ${\bf \tilde{Y}^{\infty}-}$openness if at
least one of $(i-1,j+1)$ or $(i-1,j-1)$ is ${\bf \tilde{Y}^{\infty}-}$open. If at least one of these two sites is ${\bf \tilde{Y}^{\infty}-}$open,
then we know that there are $m$ points $z_1,\ldots,z_m$ of $({\bf \tilde{Y}^{\infty}_n})$ in $S_{i,j}$. (If there are more than $m$ such points,
for definiteness let $z_1,\ldots,z_m$ be the closest ones to the center of the square.) Site $(i,j)$ will be ${\bf \tilde{Y}^{\infty}-}$open
if and only if $z_1,\ldots,z_m$ have at least $m$ descendants in generation $k$ (counted from the $z_i$, not from
${\bf \tilde{Y}^{\infty}_0}$) in $S_{i+1,j-1}$, and at least $m$ descendants in generation $k$ in $S_{i+1,j+1}$. We require
lower bounds on the probabilities of sites being ${\bf \tilde{Y}^{\infty}-}$open, and these are provided by the following lemmas.

\begin{lemma}\label{l:origin}
Fix $\lambda<1,\delta>0$ and $m\ge1$. There exist positive integers $k(\lambda,\delta,m)$ and $M(\lambda,\delta,m)$ such that,
with the above definitions, the probability that $(0,0)$ is ${\bf \tilde{Y}^{\infty}-}$open is at least $1-\lambda-\delta$.
\end{lemma}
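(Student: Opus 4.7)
The plan is to exploit that $(\tilde{Y}^{\infty}_n)$ is a supercritical branching random walk (offspring geometric with mean $\mu=1/\lambda>1$, displacements iid $N(0,I)$) whose extinction probability is therefore exactly $\lambda$. We will choose $k$ large, and then $M$, so that (a) the process survives to generation $k$ with probability at least $1-\lambda-\delta/2$, and (b) on that survival event, each of $S_{1,+1}$ and $S_{1,-1}$ contains at least $m$ generation-$k$ particles with conditional probability at least $1-\delta/2$. Together these give $\Prb(\text{origin is }\tilde{Y}^{\infty}\text{-open})\ge 1-\lambda-\delta$.

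Part (a) is standard: the iterates $f^{(k)}(0)$ of the offspring generating function $f(x)=\lambda/(1+\lambda-x)$ converge monotonically up to the extinction probability $\lambda$, so $\Prb(|\tilde{Y}^{\infty}_k|\ge 1)\to 1-\lambda$ as $k\to\infty$. For part (b), write $N^\pm = \#\{v\in\tilde{Y}^{\infty}_k:X_v\in S_{1,\pm 1}\}$. Since a generation-$k$ particle's position from the origin is a sum of $k$ iid $N(0,I)$ steps, it is marginally $N(0,kI)$, and
\[
\E[N^\pm] = \mu^k\,\Prb\bigl(N(0,kI)\in S_{1,\pm 1}\bigr) = \Theta(\mu^k M^2/k)
\]
for $M\le \sqrt{k}$. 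This grows exponentially in $k$, so the real issue is concentration. I would obtain it either via a direct second-moment estimate -- decomposing $\E[(N^\pm)^2]$ over the generation $j$ of the most recent common ancestor of a pair of generation-$k$ particles, and using independence of the subtrees below the MRCA together with Gaussian convolution along each branch to get $\E[(N^\pm)^2]\le C(\lambda)\,\E[N^\pm]^2$ up to vanishing error -- or, more cleanly, by invoking Biggins' almost-sure LLN for the empirical measure of a BRW, which gives $N^\pm/(\mu^k M^2/k)\to W/(2\pi)$ almost surely on survival, with $W>0$ a.s.\ on survival.

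The main obstacle is to control $N^+$ and $N^-$ \emph{jointly}, since the lemma asks that both be at least $m$. Here I observe that the two squares lie on opposite sides of the origin, so after the first generation the subtrees that reach $S_{1,+1}$ and $S_{1,-1}$ have essentially disjoint ancestry, and the two counts can be analysed in parallel using the same second-moment machinery. Alternatively, Biggins' theorem gives the \emph{same} random limit $W$ for both target squares, and the event $\{W>0\}$ coincides a.s.\ with survival, so that $\Prb(N^+\ge m\text{ and }N^-\ge m\mid|\tilde{Y}^{\infty}_k|\ge 1)\to 1$ as $k\to\infty$. Choosing $k$ so that both the survival deficit $\lambda-\Prb(|\tilde{Y}^{\infty}_k|=0)$ and the concentration error for the pair $(N^+,N^-)$ are below $\delta/2$, and $M$ so that the Gaussian mass of each $S_{1,\pm 1}$ comfortably exceeds $m/\mu^k$ (e.g.\ $M=\lfloor\sqrt{k}\rfloor$), completes the argument.
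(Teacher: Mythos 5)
Your overall strategy is sound and proves the right statement, but it takes a genuinely different and heavier route than the paper. The paper's proof is a two-stage amplification: first run the process to a generation $k'$ at which, with probability at least $1-\lambda-\delta/4$, there are at least $N$ individuals, at least half of them within distance $M=\lceil 2\sqrt{k'}\rceil$ of the origin; then run a further $M^2$ generations, observe that each surviving individual independently places a random generation-$(k'+M^2)$ descendant in $S_{1,1}$ (and in $S_{1,-1}$) with probability bounded away from $0$, and use a law of large numbers over the $N$ \emph{independent} subtrees to push the conditional probability of getting $m$ descendants in each square up to $1-\delta$. This avoids any second-moment computation on the whole tree and any appeal to limit theorems for branching random walks. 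Your Biggins-LLN route (with $M=\lfloor\sqrt k\rfloor$ so that $S_{1,\pm1}/\sqrt k$ are fixed Borel sets and $N^{\pm}/\mu^k\to W\,\Phi(A^{\pm})$ a.s.\ with the \emph{same} $W>0$ a.s.\ on survival) does close the argument, and it handles the joint requirement on $N^+$ and $N^-$ for free; what it buys is a cleaner conceptual statement, at the cost of invoking a nontrivial external theorem where the paper needs only elementary independence.

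One branch of your proposal does not work as stated, however. A bound of the form $\E[(N^{\pm})^2]\le C(\lambda)\,\E[N^{\pm}]^2$ gives, via Paley--Zygmund, only $\Prb(N^{\pm}>0)\ge 1/C(\lambda)$; for geometric offspring the MRCA decomposition yields $C(\lambda)\to 2/(1-\lambda)$, so this route caps out at probability about $(1-\lambda)/2$, not $1-\lambda-\delta$. The point is that $N^{\pm}/\E[N^{\pm}]$ does not concentrate around $1$: it converges to the nondegenerate martingale limit $W$. To make the second-moment machinery deliver the sharp constant you must upgrade it to $L^2$-convergence of $N^{\pm}/\E[N^{\pm}]$ to $W$ together with the identification $\{W>0\}=\{\text{survival}\}$ a.s.\ (Kesten--Stigum), which is essentially the Biggins alternative you already list. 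So rely on that branch, or switch to the paper's elementary two-stage argument.
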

\begin{proof}
Since the proof of an almost identical statement appears in~\cite{HaMe} (only the offspring size distribution is different),
we will just sketch the argument. For $\lambda<1$, the branching process is supercritical, and by \eqref{geom} we can find a
generation $k'(\lambda,\delta,m)$ so that the probability that there are, say, $N=N(\lambda,\delta,m)$ members in generation
$k'$ is at least $1-\lambda-\delta/4$. These $N$ individuals $z_1,\ldots,z_N$ will all be at distance about $\sqrt{k'}$ from
the origin, and we can ensure that, with probability $1-\lambda-\delta/2$, at least half of them lie within distance
$M=\lceil2\sqrt{k'}\rceil$ of the origin. If we run the process for another $M^2$ generations, then about $\lambda N$ of the
$z_i$ will not have any descendants in generation $k'+M^2$. However, if we pick a random descendant of each remaining $z_j$
in generation $k'+M^2$, there is a positive probability that it will land in $S_{1,1}$ or $S_{1,-1}$, since this descendant
will lie about distance $M$ from $z_j$, which in turn is likely to lie within distance $M$ from the origin. Consequently,
from the independence, if $N$ is large enough, we will have, with probability at least $1-\lambda-\delta$, at least $m$
descendants of ${\bf \tilde{Y}^{\infty}_0}$ in generation $k=k'+M^2$ in each of $S_{1,1}$ and $S_{1,-1}$.
\end{proof}

\begin{lemma}\label{l:Zij}
Fix $\lambda<1$ and $\delta>0$. Then there exist positive integers
\[
m(\lambda,\delta),k(\lambda,\delta){\rm\ and\ }M(\lambda,\delta)
\]
such that, with the above definitions, the probability that $(0,0)$ is ${\bf \tilde{Y}^{\infty}-}$open is at least $1-\lambda-\delta$,
and the probability that a site $(i,j)$ with $(i,j)\not=(0,0)$ is ${\bf \tilde{Y}^{\infty}-}$open is at least $1-\delta$.
\end{lemma}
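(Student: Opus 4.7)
The $(0,0)$ part of the statement is immediate from \Lm{origin} applied with the value of $m$ we fix below, so the real work lies in the case $(i,j)\ne(0,0)$. The essentially new feature there is that we now begin not from a single root but from $m$ marked points $z_1,\ldots,z_m\in S_{i,j}$, each of which spawns its own branching random walk inside $({\bf \tilde Y^{\infty}_n})$. Because in $({\bf \tilde Y^{\infty}_n})$ the offspring sizes are independent geometric and all displacements are independent $N(0,I)$ vectors, the $m$ subtrees rooted at the $z_a$ are \emph{mutually independent}. The plan is to show that each individual subtree already produces the required descendants with probability at least $1-\lambda-\delta'$ for some small auxiliary $\delta'$, and then to use independence to drive the probability that \emph{all} $m$ subtrees fail below the prescribed $\delta$.

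Concretely, I would first pick $\delta'>0$ with $\lambda+\delta'<1$ and then choose $m=m(\lambda,\delta)$ so large that $(\lambda+\delta')^m\le\delta$; this fixes the integer $m$ in the statement. Next I would rerun the proof of \Lm{origin} with parameters $(\lambda,\delta',m)$, but starting from an \emph{arbitrary} root $z\in S_{i,j}$ rather than from the origin: one again locates $N$ descendants in some intermediate generation $k'$ within distance $O(\sqrt{k'})$ of $z$, then runs for a further $M^2$ generations so that a random descendant of each survivor lands in any prescribed unit square at distance $\Theta(M)$ from $z$ with positive probability. Since the target squares $S_{i+1,j\pm 1}$ lie within distance $\Theta(M)$ of \emph{every} point of $S_{i,j}$, this yields a bound $1-\lambda-\delta'$ uniform in the position of $z$ inside $S_{i,j}$. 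Applying the bound to each $z_a$ and using independence of the subtrees gives
\[
\Prb\bigl((i,j)\text{ is }{\bf \tilde Y^{\infty}}\text{-open}\bigr)\;\ge\;1-(\lambda+\delta')^m\;\ge\;1-\delta,
\]
since a single successful subtree already places $m$ descendants in each of $S_{i+1,j\pm 1}$.

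To realise both bounds with a common triple $(m,k,M)$ I would take the $m$ just fixed and set $k,M$ to be the maxima of the constants produced by \Lm{origin} applied directly with parameters $(\lambda,\delta,m)$ for the origin site and of those produced by the uniform version above with parameters $(\lambda,\delta',m)$ for the other sites. Enlarging $k$ and $M$ beyond the original constants only helps: the underlying branching process is supercritical and \eqref{geom} provides geometrically many additional descendants in later generations, so both probability estimates persist at the enlarged parameters.

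The main obstacle is the uniformity just invoked. A root $z$ near a corner of $S_{i,j}$ lies slightly further from $S_{i+1,j\pm 1}$ than one near the centre, so the positive constant controlling ``a random grandchild lands in the target square'' in the proof of \Lm{origin} must be made uniform over $z\in S_{i,j}$. This is not difficult in principle: all relevant displacements are of order $M$ and the $N(0,I)$ density is strictly positive on compact sets, so taking $M$ sufficiently large makes the worst-case success probability (over $z\in S_{i,j}$) as close to the centred value as we like, and in particular keeps it above $1-\lambda-\delta'$.
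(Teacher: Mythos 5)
Your proposal is correct and follows essentially the same route as the paper: reduce to an auxiliary tolerance with $\lambda+\delta'<1$, choose $m$ with $(\lambda+\delta')^m\le\delta$, rerun Lemma~\ref{l:origin} from an arbitrary root $z\in S_{i,j}$ to get a uniform success probability of $1-\lambda-\delta'$ per subtree, and use the independence of the $m$ subtrees in $({\bf \tilde Y^{\infty}_n})$ to bound the failure probability by $(\lambda+\delta')^m$. Your additional remarks on uniformity over the position of $z$ and on reconciling the constants $k,M$ across sites are sensible elaborations of points the paper leaves implicit.
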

\begin{proof}
The first part is just \Lm{origin}. For the second part, we modify the proof of \Lm{origin}.
We reduce $\delta$ if necessary so that $\lambda+\delta<1$, and choose $m$ so that $(\lambda+\delta)^m<\delta$.
Starting at an arbitrary point $z$ of ${\bf \tilde{Y}^{\infty}_0}$ in $S_{i,j}$, rather than the center, we choose $k$ and $M$ so that,
with probability at least $1-\lambda-\delta$, $z$ has at least $m$ descendants in generation $k$ (counted from $z$) of
${\bf \tilde{Y}^{\infty}_0}$, in each of $S_{i+1,j-1}$ and $S_{i+1,j+1}$. Applying this to each of $z_1,\ldots,z_m$, with probability
at least $1-(\lambda+\delta)^m>1-\delta$, some $z_i$ has least $m$ descendants in generation $k$, in each of $S_{i+1,j-1}$ and $S_{i+1,j+1}$.
\end{proof}

Define ${\bf \tilde{Y}-}$openness in the obvious manner, using the process $({\bf \tilde{Y}_n})$ rather than $({\bf \tilde{Y}^{\infty}_n})$.
The following lemma is the analogue of \Lm{Zij} for the process ${\bf \tilde{Y}_n}$.

\begin{lemma}\label{l:Zij2}
Fix $\lambda<1$ and $\delta>0$. Then there exist positive integers
\[
m(\lambda,\delta),k(\lambda,\delta),M(\lambda,\delta){\rm\ and\ }d(\lambda,\delta)
\]
such that, with the above definitions, and for $d\ge d(\lambda,\delta)$, the probability that $(0,0)$ is ${\bf \tilde{Y}-}$open
is at least $1-\lambda-\delta$, and the probability that a site $(i,j)$ with $(i,j)\not=(0,0)$ is ${\bf \tilde{Y}-}$open is at
least $1-\delta$.
\end{lemma}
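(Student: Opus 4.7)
The plan is to reduce \Lm{Zij2} to \Lm{Zij} by showing that, over the first $k$ generations, the projected process $({\bf \tilde{Y}_n})$ converges in finite-dimensional distribution to $({\bf \tilde{Y}^{\infty}_n})$ as $d\to\infty$, and that the $\tilde{Y}$-openness events are continuity events for the limit law. First I would apply \Lm{Zij} with $\delta/2$ in place of $\delta$ to obtain integers $m=m(\lambda,\delta/2)$, $k=k(\lambda,\delta/2)$, and $M=M(\lambda,\delta/2)$ for which the two required lower bounds hold for $({\bf \tilde{Y}^{\infty}_n})$, namely $1-\lambda-\delta/2$ at the origin and $1-\delta/2$ elsewhere. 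I will then keep the same values of $m,k,M$ for the process $({\bf \tilde{Y}_n})$, and show that the $\tilde{Y}$-openness probabilities differ from the $\tilde{Y}^{\infty}$-openness probabilities by at most $\delta/2$ once $d$ is sufficiently large.

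The main point is that the openness of a site depends only on the positions of the (at most) first $k$ generations of descendants of a single starting point. Since the offspring size distribution is geometric with mean $1/\lambda$ in both $({\bf \tilde{Y}_n})$ and $({\bf \tilde{Y}^{\infty}_n})$, the distribution of the total number $T_k$ of individuals in the first $k$ generations is the same for the two processes, and has exponentially decaying tails; I choose $N$ so large that $\Prb(T_k>N)<\delta/8$. Conditioning on a particular genealogy with $T_k\le N$, the offsets of children from their parents are, in $({\bf \tilde{Y}^{\infty}_n})$, i.i.d.\ $N(0,I)$, while in $({\bf \tilde{Y}_n})$ they are independent across different parent-scans (because each scan uses a fresh copy of $\cP,\cP'$) and conditionally independent within a single scan (given the number of children, the isotropy of the Poisson process makes the directions i.i.d.\ uniform on $S^{d-1}$). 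By \Lm{dist} the radii of these offsets tend to $1$ in probability, and by \Lm{proj} their planar projections under $L$ converge to $N(0,I)$; Slutsky's theorem then gives joint convergence of the full collection of $T_k-1$ offsets to an i.i.d.\ Gaussian collection. Hence, conditional on the genealogy, the joint law of the positions of the first $k$ generations in $({\bf \tilde{Y}_n})$ converges weakly to the corresponding law in $({\bf \tilde{Y}^{\infty}_n})$.

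The $\tilde{Y}^{\infty}$- (and $\tilde{Y}$-) openness of a site is the event that certain fixed axis-parallel squares in $\R^2$ each contain at least $m$ of the generation-$k$ descendants. The boundaries of these squares have Lebesgue measure zero, so they are continuity sets for the absolutely continuous Gaussian limit. Summing over the finitely many genealogies with $T_k\le N$ and using the Portmanteau theorem, the probability of $\tilde{Y}$-openness therefore converges to the probability of $\tilde{Y}^{\infty}$-openness; choosing $d(\lambda,\delta)$ large enough makes the difference below $\delta/2$, and combining with the bounds from \Lm{Zij} yields the required $1-\lambda-\delta$ and $1-\delta$ estimates. The only real obstacle is justifying the conditional independence of the sibling offsets within a single scan in $({\bf \tilde{Y}_n})$, but this follows from the standard fact that, given the number of Poisson points in a ball, their positions are i.i.d.\ uniform, and from the independence of the radial and angular parts of a rotation-invariant distribution.
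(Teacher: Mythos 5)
Your proposal is correct and follows essentially the same route as the paper's own (much terser) proof: apply \Lm{Zij} with $\delta/2$ in place of $\delta$, then take $d$ large enough that the generation-$k$ position distributions of $({\bf \tilde{Y}_n})$ and $({\bf \tilde{Y}^{\infty}_n})$ are close enough to perturb the openness probabilities by at most $\delta/2$. The extra detail you supply (truncating the genealogy, joint convergence of the offsets via \Lm{dist}, \Lm{proj} and Slutsky, and the continuity-set/Portmanteau step for the squares) is a sound fleshing-out of what the paper leaves implicit.
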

\begin{proof}
This follows from \Lm{Zij} and \Lm{proj}. First, we use \Lm{Zij} with $\delta/2$ in place of $\delta$ to find suitable values
of $k,m$ and $M$. Then we choose $d$ large enough so that the distributions of the positions of the descendants in generation $k$
of $({\bf \tilde{Y}_n})$ and $({\bf \tilde{Y}^{\infty}_n})$ are sufficiently close so as to change the required probabilities
by at most $\delta/2$.
\end{proof}

If we could draw the same conclusion for the projection of the process $({\bf X^d_n})$, we would be done. However,
the process $({\bf X^d_n})$ is harder to analyze, owing to possible interference between steps. To be specific,
denote by ``step $(i,j)$" the procedure whereby we determine, for the projection of the process $({\bf X^d_n})$,
whether or not the site $(i,j)$ is ${\bf \tilde{X}-}$open (with the obvious definition).
It is possible that, during step $(i,j)$, a black point (or a red point) is born in
a region that was scanned as part of a previous step $(i',j')$. It is also possible that a red point, discovered
in some previous step $(i',j')$, is encountered in step $(i,j)$. We need to show that both of these possibilities
can be neglected, and to do this, we will need to know something about the history of $({\bf X^d_n})$. For this, we
will need to modify $({\bf X^d_n})$ slightly to exclude certain undesirable (but unlikely) events.

For the remainder of the proof, $\delta$ and $\lambda$ will be fixed.
The first step is to show that we may assume that, in the first $k$ generations of step $(i,j)$ of $({\bf X^d_n})$,
the total number of
descendants is bounded by an absolute constant $N$, the total volume scanned is bounded by an absolute constant $V$, and the
distance of the $L-$projection of any point (in these first $k$ generations) from $(Mi,Mj)$ is at most an
absolute constant $R$.
(Here, these ``absolute constants" might depend on the (fixed) $\delta$ and $\lambda$, but they don't depend on $d$.)
Indeed, the probabilities of the failures of these conditions can each be made arbitrarily small by taking $N,V$ and
$R$ suitably large. If any of them fail, we modify the process $({\bf X^d_n})$ to terminate at the first failure, and
deem step $(i,j)$ to be a failure. We denote the modified process by $({\bf {X}^*_n})$, and $({\bf \tilde{X}^*_n})$ will
be the $L-$projection of $({\bf {X}^*_n})$ to $\R^2$.

To summarize, we are changing ${\bf X}^d_n$ by deleting some offspring when certain conditions fail. The result, $({\bf {X}^*_n})$,
might not be a subtree of ${\bf X}^d_n$, since, in constructing $({\bf {X}^*_n})$, we might attach points of ${\bf X}^d_n$ which
were deleted from $({\bf {X}^*_n})$ at an earlier stage. Nonetheless, $({\bf {X}^*_n})$ will still be a subtree of $\vec{G}_{\rm sec}$,
and so an infinite path in $({\bf {X}^*_n})$ still implies out-percolation in $\vec{G}_{\rm sec}$.
The preceding discussion, together with \Lm{xapproxy}, proves the following.

\begin{lemma}\label{l:wapproxz}
Fix $\delta>0$ and $k\ge 1$. If $d\ge d_1(\delta,k)$, then the probability that $({\bf \tilde{X}^*_n})$ and $({\bf \tilde{Y}_n})$
differ in the first $k$ generations is less than $\delta$.\qed
\end{lemma}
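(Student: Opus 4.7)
The plan is to show that, with high probability, none of the three truncation conditions defining $({\bf {X}^*_n})$ from $({\bf X^d_n})$ are triggered in the first $k$ generations; combining this with \Lm{xapproxy} and passing to the $L$-projection then yields the conclusion. I would first apply \Lm{xapproxy} with $\eps=\delta/2$, so that for $d\ge d_0(\delta/2,k)$ the processes $({\bf X^d_n})$ and $({\bf Y^d_n})$ agree on the first $k$ generations with probability at least $1-\delta/2$. On this event, $({\bf {X}^*_n})$ differs from $({\bf X^d_n})$ only if the total number of descendants exceeds $N$, the total volume scanned exceeds $V$, or the $L$-image of some descendant lies farther than $R$ from the seed's projection. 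The idea is then to choose $N$, $V$, $R$ (depending only on $\lambda$, $\delta$, $k$) so that each such failure has probability at most $\delta/6$, and union-bound.

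The size bound is immediate from \eqref{geom}: the offspring distribution of $({\bf Y^d_n})$ is geometric with mean $1/\lambda$ independently of $d$, so the total size of the first $k$ generations is stochastically bounded by a quantity depending only on $\lambda$ and $k$. The distance bound follows by combining \Lm{dist} (each parent--child offset has length close to $1$ in high $d$), \Lm{proj} (the $L$-projection of a unit-length offset converges to a standard bivariate normal), and the size bound: conditional on at most $N$ descendants, every projected point lies within some radius $R=R(k,N)$ of the origin except with probability at most $\delta/6$ for $d$ large. The subtlest ingredient is the volume bound, since each scanned ball has volume of order $\alpha_d$; the constant ``$V$'' really has to be read as a bound on the total scanned volume measured in units of $\alpha_d$, or equivalently as a bound on the number of scans. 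This bound in turn follows from the size estimate together with \Lm{dist}, which ensures each scan ends at radius close to $1$ and hence contributes volume close to $\alpha_d$. In later arguments $V$ is used only to bound interference probabilities, which scale like $V/\alpha_d$ and therefore vanish as $d\to\infty$.

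A union bound over the coupling failure (probability $\le\delta/2$) and the three truncation failures (each of probability $\le\delta/6$) shows that $({\bf {X}^*_n})$ and $({\bf Y^d_n})$ agree on the first $k$ generations with probability at least $1-\delta$ for all $d\ge d_1(\delta,k)$; since the $L$-projection is deterministic given the underlying processes, $({\bf \tilde{X}^*_n})$ and $({\bf \tilde{Y}_n})$ agree on the same event, which gives the lemma. The main obstacle is the proper interpretation of the volume truncation: individual scanned volumes are not absolutely bounded but grow like $\alpha_d$, so one has to extract the scale-invariant content (the number of scans and the near-unit radius of each) in order for the bound to be uniform in $d$.
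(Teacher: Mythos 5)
Your argument is essentially the paper's own: the paper derives this lemma by combining Lemma~\ref{l:xapproxy} with the observation that the three truncation events defining $({\bf {X}^*_n})$ each have probability that can be made arbitrarily small by choosing $N$, $V$ and $R$ large, exactly the union bound you carry out. You in fact supply more detail than the paper does, and your remark that the volume truncation must be read in units of $\alpha_d$ (equivalently, as a bound on the number of scans, each of near-unit normalized measure) is a correct and worthwhile clarification of a point the paper leaves implicit.
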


We have dealt with two ways in which step $(i,j)$ could fail: the processes $({\bf \tilde{X}^*_n})$ and $({\bf \tilde{Y}_n})$
might differ, or $({\bf \tilde{Y}_n})$ might fail to proceed to both $S_{i+1,j-1}$ and $S_{i+1,j+1}$ for some reason involving
only the $k$ generations corresponding to step $(i,j)$. To these we must add two more: the step might fail
because a black or red point might be born in a previously scanned region (from a step $(i',j')$), or a previously discovered
red point (from a step $(i',j')$) might be encountered. If we can show that, conditioned on the process so far, the probability
of each of these two events can be bounded by $\delta$, we will be done. (We perform the steps in the lexicographic order
$(0,0),(-1,1),(1,1,),(2,-2),\ldots$.) The following lemma does just this.

\begin{lemma}\label{l:far} Let the process $({\bf \tilde{X}^*_n})$ be defined as above. Then, during the step $(i,j)$, the
probability, conditioned on the history of $({\bf \tilde{X}^*_n})$ up to step $(i,j)$, that either a black or a red point is
born in a region scanned in a previous step, or that a red point from a previous step is encountered, is at most $\delta$.
\end{lemma}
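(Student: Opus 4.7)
The plan is to condition on the entire history of $({\bf X^*_n})$ through step $(i,j)$ and then bound each type of interference separately using high-dimensional concentration of measure. The history fixes the $\R^d$-positions of the at most $m$ roots $\zeta_1,\dots,\zeta_m$ of the current step, the previously scanned balls $R^{(1)},\dots,R^{(T)}$, and the previously discovered red points $r^{(1)},\dots,r^{(S)}$. By the definition of $({\bf X^*_n})$, the current step involves at most $N$ new scan balls of radii in $(0.9,1.1)$ (using \Lm{dist}), has total scanned volume at most $V$, and lies within $\R^d$-distance $O(R+\sqrt k)$ of the $\zeta_i$; here $N$, $V$, $R$, $k$, $m$ depend only on $\lambda,\delta$ and not on $d$.

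For type-(a) interference, a fresh point of $\cP\cup\cP'$ (in the coupling to $({\bf Y^d_n})$ set up in \Lm{xapproxy}) being born inside $U=\bigcup_{s<(i,j)}R^{(s)}$, I would bound its conditional probability by $(1+\lambda)\alpha_d^{-1}|\Omega^{(i,j)}\cap U|$, where $\Omega^{(i,j)}$ is the union of the new scan balls. By \Lm{intersect}, each pair of scan balls whose centers are at $\R^d$-distance at least $0.9$ intersects in a set of volume $o(\alpha_d)$, so the task reduces to ruling out old scan centers within $\R^d$-distance $0.9$ of any new scan center. For previous scans coming from lattice sites $(i',j')\ne(i,j)$ other than the predecessors treated below, the $L$-projections of the two sets of centers differ in $\R^2$ by at least $M-2R-O(\sqrt k)$; choosing $M$ large and using the near-orthogonality in $\R^d$ of independent scan directions (again a consequence of \Lm{intersect}) forces the full $\R^d$-separation to exceed $0.9$ with probability tending to $1$ as $d\to\infty$, making the contribution of non-predecessor sites to $|\Omega^{(i,j)}\cap U|$ negligible.

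For type-(b) interference, an old red point $r^{(s)}$ falling in a new scan ball $B(p,r)$, I would union-bound over the at most $N$ scans and the at most $NT$ old red points. For each pair, either $\|p-r^{(s)}\|\ge 1.1$, in which case $r^{(s)}\notin B(p,r)$ with probability $1-o(1)$ since the fresh nearest red point of $\cP'$ lies at distance close to $1$ by \Lm{dist}, or $\|p-r^{(s)}\|<1.1$, which forces the parent $p$ to lie near $r^{(s)}$ and is handled by the same $\R^d$-separation argument as case (a).

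The main obstacle is the immediate predecessor sites $(i-1,j\pm 1)$ of $(i,j)$, whose roots lie within $\R^d$-distance $\sqrt k$ of the current root, so their scanning regions and old red points can come much closer than $0.9$ in $\R^d$ and cannot be ruled out by a crude distance bound. Here the resolution, the delicate part of the proof, is that in high $d$ a fresh scan direction emanating from $\zeta$ is almost surely nearly orthogonal (by \Lm{proj} and concentration on $S^{d-1}$) to the direction back into the predecessor's scanned region; hence by \Lm{intersect} applied locally, only a vanishing fraction of each new scan ball sits inside the predecessor's region, and only a vanishing angular fraction of new scan orientations points toward any given old red point of the predecessor. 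Combining this with the estimates for non-predecessor sites and summing over the bounded-in-$d$ collection of contributing previous steps makes the total conditional probability of interference at most $\delta$ whenever $d\ge d(\lambda,\delta)$.
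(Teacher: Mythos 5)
Your overall setup---conditioning on the history, separating the two interference types, and reducing the first to a bound on $|\Omega^{(i,j)}\cap U|$---matches the paper's, and your treatment of the predecessor sites is essentially correct, though it is the easy part: it is just the argument of \Lm{xapproxy} over again, and the paper dispatches it (together with \emph{all} sites within a bounded lattice distance $D$ of $(i,j)$) in one sentence, ``only boundedly many steps are involved.'' The genuine gap is in the part you treat as routine: the previous steps at large lattice distance. The number $T$ of completed steps is not bounded independently of $(i,j)$, so an estimate of the form ``each pair of scan balls with centers at $\R^d$-distance at least $0.9$ overlaps in volume $o(\alpha_d)$'' cannot simply be summed: $d$ must be fixed once and for all, depending only on $\lambda$ and $\delta$, and $NT\cdot o(\alpha_d)$ is not small uniformly in $T$. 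The same objection applies to your union bound over the ``at most $NT$ old red points,'' each missed ``with probability $1-o(1)$.'' Your intermediate step is also unsound as stated: a separation of $M-2R$ between $L$-projections forces only $\|x-y\|\ge (M-2R)/\sqrt{d}\to 0$ in $\R^d$, so it does not by itself produce the $0.9$-separation (which in fact comes from the branching-random-walk geometry and, in any case, yields only the non-summable per-pair $o(\alpha_d)$).

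What is missing is a bound on the contribution of a previous step $(i',j')$ that decays summably in its lattice distance $x$ from $(i,j)$, and this is exactly what the paper extracts from \Lm{proj}: the region scanned in step $(i',j')$ consists of balls centered at points whose $L$-projections lie within $R$ of $(Mi',Mj')$, and by the faster-than-polynomial decay of the limiting Gaussian density, at most $\delta'/x^3$ of that volume projects to within distance $M+R$ of $(Mi,Mj)$. Summing over the $O(x)$ sites at distance $x$ and then over $x\ge D$ bounds the total interfering old volume by $O(\delta')$ \emph{uniformly in the number of previous steps}; a symmetric estimate (the new scanned volume projecting to distance between $x$ and $x+1$ from $(Mi,Mj)$ has volume at most $\delta''/x^2$) controls the chance of hitting an old red point from a distant step. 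Without some such quantitative, summable use of the projection, the accumulated interference from far-away steps is uncontrolled and the lemma does not follow from your argument.
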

\begin{proof}
Consider a previous step $(i',j')$, and suppose that $(Mi',Mj')$ is at distance $x\gg 2R$ from $(Mi,Mj)$. The total volume
scanned in step $(i',j')$ is at most $V$. Some of this scanned volume falls, when projected, into $S_{i,j}$.
However, the projected distance from the center $(Mi,Mj)$ of $S_{i,j}$ to any
point around which scanning has taken place during step $(i',j')$ is at least $x-R$. Consequently, from \Lm{proj}, and
the faster than exponential decay of the normal distribution, if  $x\ge D$ is sufficiently large, at most $\delta'/x^3$ of
the volume scanned in step $(i',j')$ falls, when projected, within distance $M+R$ from the center $(Mi,Mj)$ of $S_{i,j}$.
Summing over all square centers $(Mi',Mj')$ at distance more than $D$ from $(Mi,Mj)$, the total previously scanned volume
from these distant steps (where $x\ge D$) which falls, after projection, within distance $M+R$ from $(Mi,Mj)$ is at most
$\delta'$. Since there are at most $N$ individuals in step $(i,j)$, we can choose $\delta'$ (and hence $D$), so that the
probability that a black or red point from step $(i,j)$ is born in this region of volume $\delta'$ during step $(i,j)$ is
at most $\delta/3$. Similarly, the probability that, while scanning in step $(i,j)$, we hit a previously encountered red
point from a distant step is at most $\delta/3$. (This is because the region we scan in step $(i,j)$ that lies (when
projected) at distance between $x$ and $x+1$ from $(Mi,Mj)$ has volume at most $\delta''/x^2$, and so, by integration,
this random region is unlikely to contain any previously discovered points at projected distance more than $D$ from $(Mi,Mj)$.)
For the steps at distance at most $D$, we can bound the probability of failure of either type by $\delta/3$, because only
boundedly many steps are involved.
\end{proof}

Together, the last three lemmas prove the following one.

\begin{lemma}\label{l:Wij}
Fix $\lambda<1$ and $\delta>0$. Then there exist constants
\[
m(\lambda,\delta),k(\lambda,\delta),M(\lambda,\delta),N(\lambda,\delta),V(\lambda,\delta),
R(\lambda,\delta){\rm\ and\ }d(\lambda,\delta)
\]
such that, with the above definitions, the probability that $(0,0)$ is ${\bf \tilde{X}^*-}$open is at least $1-\lambda-2\delta$,
and the probability that a site $(i,j)$ with $(i,j)\not=(0,0)$ is ${\bf \tilde{X}^*-}$open is at least $1-3\delta$.\qed
\end{lemma}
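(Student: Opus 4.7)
The plan is to combine Lemmas \Lm{Zij2}, \Lm{wapproxz} and \Lm{far} via a union bound, choosing the various parameters in the right order. First I would apply \Lm{Zij2} with $\lambda$ and $\delta$ as given to extract $m(\lambda,\delta)$, $k(\lambda,\delta)$, $M(\lambda,\delta)$ and a dimension threshold $d_0(\lambda,\delta)$ such that, for $d\ge d_0$, the site $(0,0)$ is ${\bf \tilde{Y}-}$open with probability at least $1-\lambda-\delta$ and any other site is ${\bf \tilde{Y}-}$open with probability at least $1-\delta$. With $k$ now fixed, I would next choose the truncation constants $N(\lambda,\delta)$, $V(\lambda,\delta)$, $R(\lambda,\delta)$ in the definition of $({\bf X^*_n})$ large enough that, in any single step, the probability of exceeding them is at most $\delta$; this is possible because the sizes of the first $k$ generations have geometric tails by \eqref{geom}, and the volumes and projected radii inherit concentration from \Lm{proj}.

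Next, with $k, m, M, N, V, R$ fixed, I would invoke \Lm{wapproxz} and enlarge $d$ if necessary so that $({\bf \tilde{X}^*_n})$ and $({\bf \tilde{Y}_n})$ agree over the first $k$ generations of a single step except on an event of probability at most $\delta$. Finally, I would apply \Lm{far} (taking its $\delta$-parameter to be the current $\delta$) to bound, conditionally on the history of $({\bf \tilde{X}^*_n})$ up to step $(i,j)$, by $\delta$ the probability that step $(i,j)$ fails because a new black or red point lands in a previously scanned region or because scanning in step $(i,j)$ encounters a previously discovered red point.

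The union bound then handles both cases of the lemma. For $(i,j)=(0,0)$ there is no earlier step, so ${\bf \tilde{X}^*-}$openness can fail only because ${\bf \tilde{Y}-}$openness fails (probability at most $\lambda+\delta$) or because the coupling with $({\bf \tilde{Y}_n})$ breaks (probability at most $\delta$); this gives success probability at least $1-\lambda-2\delta$. For $(i,j)\ne(0,0)$, all three sources of failure contribute $\delta$ each, giving $1-3\delta$.

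The main obstacle is simply the bookkeeping: the constants must be produced in the correct order, since \Lm{Zij2} supplies $m, k, M$ in terms of $\lambda, \delta$; the truncation parameters $N, V, R$ can only be set once $k$ is known; \Lm{far} needs $R$ and $V$ (and $M$ large enough relative to $R$) to control the geometric contribution from distant previously scanned regions; and the final enlargement of $d$ must be taken large enough for \Lm{Zij2}, \Lm{wapproxz} and the concentration used inside \Lm{far} to all apply simultaneously. Once this ordering is respected, the three sources of failure are disjoint in their randomness---the first two concern the fresh offspring of step $(i,j)$, while \Lm{far} is purely conditional on earlier steps---so the union bound is elementary.
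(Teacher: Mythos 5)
Your proposal is correct and follows essentially the same route as the paper, which derives this lemma directly as a union bound over the three failure sources supplied by Lemmas~\ref{l:Zij2}, \ref{l:wapproxz} and \ref{l:far}, with the truncation failure folded into the $\tilde{X}^*$-versus-$\tilde{Y}$ comparison so that the origin loses only $2\delta$ and every other site loses $3\delta$. Your remarks on the order in which the constants must be fixed match the paper's own bookkeeping.
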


It only remains to put the pieces together. Given $\epsilon>0$, we choose $\delta<\epsilon/4$ so that, for oriented site
percolation on $\Lambda$ with parameter $p\ge 1-3\delta$, the oriented percolation probability (of the event that
there is an infinite directed path starting from the origin) is greater than $1-\epsilon/2$. From the previous lemma,
we find an infinite directed ${\bf \tilde{X}^*-}$path from the origin, corresponding to an infinite out-component in
$\vec{G}_{\rm sec}$, with probability at least
\[
(1-\lambda-2\delta)(1-\epsilon/2)>(1-\lambda-\epsilon/2)(1-\epsilon/2)>1-\lambda-\epsilon,
\]
as required. This completes the proof of \Th{pain}.

In two dimensions, it should be possible to improve the bound in Proposition 3 using Hall's modification, which,
for the disc model, runs as follows. Each offspring $y$ is indexed by its
distance $t$ to its parent $x$, and its offspring size distribution is bounded in terms of the area of the lune
$B(y,r)\setminus B(x,r)$. In addition, the distribution of the {\em types} of these offspring is also bounded in
terms of the same lune. Consequently, one can compare the growing cluster with an appropriate multitype
branching process (the types are indexed by $t$). For the secrecy graph, there are three parameters one might
wish to keep track of (instead of just one). These are: the radius $r$ of the disc centered at $x$, the distance $t$
of $x$ to its offspring $y$, and the location of the red point $z$ on the boundary $\partial B(x,r)$ of $B(x,r)$.
Nonetheless, one could in principle compute the appropriate conditional probability distribution and this should
result in a slightly improved upper bound.

To summarize, although branching processes are usually employed to show that percolation {\em does not occur}
in these models, they can also be used to show that percolation {\em does occur} for certain fixed values of the
parameters, as $d\to\infty$. For the secrecy graph model, it would be interesting to investigate the case $\lambda=1$,
as $d\to\infty$. Also, the proof of \Th{pain} seems to suggest that the convergence of $\lambda_{{\bf O},d}$ to 1 is
exponential, and it would be interesting to investigate this further.

\subsection{Lattice percolation (\cite{Gil61},~\cite{HaMe},~\cite{Hal85b},~\cite{PiWi},~\cite{PiWi2})}

Two variants of the basic method, applied to the Gilbert model, are described in Gilbert's original paper~\cite{Gil61}.
For both variants, fix a connection radius $r$. First, if we consider the square lattice with bonds of length $r/2$,
and make the state of a bond $e$ open iff there is at least one point of $\cP$ in the square whose {\em diagonal} is $e$,
then bond percolation in the lattice implies percolation in the Gilbert model. Second, if we consider the hexagonal
lattice where the hexagons have side length $r/\sqrt{13}$, and make the state of a hexagon open iff it contains a point
of $\cP$, then face percolation in the hexagonal lattice implies percolation in the Gilbert model. Using the fact that
the critical probabilities for both bond percolation in the square lattice and face percolation in the hexagonal lattice
are equal to $1/2$, one thus obtains upper bounds on the critical area $\pi r^2_c$ of about $17.4$ and $10.9$, respectively.
The latter value was improved to 10.588 by Hall~\cite{Hal85b} using ``rounded hexagons".

H\"aggstr\"om and Meester~\cite{HaMe} used this method to show that, for fixed $d$, percolation occurs in the $k$-nearest
neighbour model for sufficiently large $k$. Pinto and Win~\cite{PiWi} (see~\cite{PiWi2} for more details) applied it
to show that percolation occurs in all versions of the secrecy graph model when $\lambda$ is sufficiently small. For the
latter application, one needs to use {\em dependent percolation}, which means that the bounds are rather weak. In the same
paper, Pinto and Win prove an upper bound on $\lambda_{\bf U}$, also using lattice percolation. Their method is to tile
the plane with regular hexagons, each of side length $\delta$. Divide each hexagon into 6 equilateral triangles in the
obvious way. Set the state of a hexagon to be closed if it contains no black points and at least one red point in each
of its 6 triangles, and open otherwise. If the probability $g(\lambda,\delta)$ of this is at least $1/2$, the critical
probability of face percolation on the hexagonal lattice, then the origin will almost surely be surrounded by arbitrarily
large closed circuits. It is easy to check that an edge of $G_{\rm sec}$ cannot cross a closed circuit, and so percolation
will not occur in $G_{\rm sec}$ if $g(\lambda,\delta)\ge 1/2$. Now
\[
g(\lambda,\delta)=\left(1-e^{-\lambda\sqrt{3}\delta^2/4}\right)^6e^{-3\sqrt{3}\delta^2/2},
\]
and, for fixed $\lambda$, we maximize $g(\lambda,\delta)$ by setting
\[
e^{-\lambda\sqrt{3}\delta^2/4}=\frac{1}{1+\lambda},
\]
so the smallest value of $\lambda$ for which
\[
\left(\frac{\lambda}{1+\lambda}\right)^6\left(\frac{1}{1+\lambda}\right)^{6/\lambda}\ge\frac12
\]
will be an upper bound for $\lambda_{\bf U}$. The last equation can be solved numerically to yield the bound
$\lambda_{\bf U}\le 40.9$.
The method can easily be modified to give bounds for the other $\lambda_{\bf X}$, but we expect that the results
will be rather weak.

In summary, lattice percolation has generally been used to show that percolation {\em does occur} in these models, although
Pinto and Win also used it to show that percolation {\em does not occur} in the secrecy graph if $\lambda$ is
sufficiently large.

\subsection{The rolling ball method (\cite{BBknnp})}

This is a method designed to show that percolation {\em does occur} for certain parameter ranges in various models.
It was applied in~\cite{BBknnp} to prove upper bounds for critical values of $k$ in the $k$-nearest neighbour model.
Unfortunately, when applied to the Gilbert disc model, it only yields an upper bound (on $\pi r_c^2$) of about 12,
worse than the previously best known bound.

The method involves comparison with 1-independent percolation and carries through almost entirely for the
secrecy graph. We will only need to modify some of the equations from~\cite{BBknnp}: however, for completeness,
we include a full account of the method here. First, we state precisely what we mean by a {\em 1-independent
percolation model}.

\begin{definition}
A bond percolation model on $\Z^2$ is said to be 1-independent if, whenever $E_1$ and $E_2$ are sets of edges at graph
distance at least 1 from each other (i.e., if no edge of $E_1$ is incident to any edge of $E_2$), the state of
the edges in $E_1$ is independent of the state of the edges in $E_2$.
\end{definition}

\noindent We will use the following theorem about such models, proved in~\cite{BBW05}.

\begin{theorem}\label{t:onedep}
If every edge in a 1-independent bond percolation model on $\Z^2$ is open with probability at least 0.8639,
then, almost surely, there is an infinite open component. Moreover, if $B$ is a bounded region of the plane,
there is, almost surely, a cycle of open edges surrounding $B$.\qed
\end{theorem}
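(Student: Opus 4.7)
The plan is to run a Peierls--type contour argument on the dual lattice and combine it with an independence-from-matching trick that is tailor-made for the 1-independent setting. A finite open cluster containing the origin must be surrounded by a self-avoiding cycle of closed edges in the dual lattice $(\Z^2)^*$, so it suffices to show that, with positive probability, no closed dual cycle surrounds the origin. Translation invariance of the 1-independent field and the standard Kolmogorov zero--one argument (as in \Th{01}) then upgrade this positive probability to the almost-sure existence of an infinite component.

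The enumeration step is standard: the number of self-avoiding cycles of length $n$ in $(\Z^2)^*$ that encircle the origin is at most $Cn\mu^n$ for some $\mu$ (crudely $\mu=3$, sharper if one uses the connective constant of $\Z^2$). The probability that a given such cycle is entirely closed is controlled via the 1-independence axiom, used to extract a large matching from the cycle's edges. Along a self-avoiding cycle $e_1,\ldots,e_n$, consecutive edges share an endpoint but non-consecutive edges do not, so the odd-indexed set $\{e_1,e_3,\ldots\}$ is a matching of size $\lfloor n/2\rfloor$. Iterating the definition of 1-independence on this matching shows that the $\lfloor n/2\rfloor$ edge states are jointly independent, giving
\[
\Prb(e_1,\ldots,e_n \text{ all closed})\le (1-p)^{\lfloor n/2\rfloor}.
\]
Combining, the expected number of closed encircling cycles of length $n$ is at most $Cn\mu^n(1-p)^{n/2}$. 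When $\mu\sqrt{1-p}<1$ this series converges and, restricted to cycles of length at least some large $N$, can be made as small as we like; hence with positive probability no closed encircling dual cycle exists, yielding the first claim.

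For the second conclusion -- an open circuit surrounding any fixed bounded region $B$ -- the argument is symmetric. The absence of such a circuit is equivalent to the existence of an infinite closed dual path emanating from a dual face inside $B$; the same enumeration-plus-matching estimate makes the expected number of such paths of length $n$ summable, and Borel--Cantelli then forces the almost-sure existence of open circuits around $B$ at arbitrarily large scales.

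The main obstacle is the numerical sharpening needed to reach the explicit constant $0.8639$. The qualitative statement ``$p$ close enough to $1$ implies 1-independent percolation'' is a clean consequence of the matching-from-1-independence trick above, but matching the stated constant requires the careful optimization performed in \cite{BBW05}, which balances the choice of edge-partition (one can do slightly better than the crude pairing into matchings by grouping consecutive edges and using joint bounds) against a sharper (connective-constant) count of self-avoiding contours. All the other steps are essentially routine once one has the matching observation.
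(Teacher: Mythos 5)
First, a structural point: the paper does not prove this statement. Theorem~\ref{t:onedep} is imported verbatim from \cite{BBW05} and stated with a reference in place of a proof, so there is no internal argument to compare yours against. Judged on its own terms, your sketch has the right general shape --- a dual contour argument in which 1-independence is exploited by extracting a matching from each contour --- and the matching step itself is sound: for pairwise non-incident edges $f_1,\dots,f_k$ you can peel them off one at a time using the definition (take $E_1=\{f_1\}$, $E_2=\{f_2,\dots,f_k\}$, and induct), giving $\Prb(\text{all closed})\le q^k$ with $q=1-p$. The problem is that everything that makes the theorem true at the specific value $0.8639$ is missing, and the gap is not merely a matter of ``careful optimization.''

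Concretely: with the bound $q^{\lfloor n/2\rfloor}$ per contour and the count $Cn\mu^n$, your Peierls series needs $\mu^2 q<1$. With $\mu=3$ this requires $p>8/9\approx 0.889$, already above $0.8639$; with the connective constant of $\Z^2$ ($\approx 2.64$, of which only effective upper bounds slightly larger are available) one gets $\mu^2 q\approx 0.95$--$0.98$ at $q=0.1361$, so the series converges but its sum is nowhere near less than $1$. At that point your plan --- discard contours of length at least $N$ and handle the short ones separately --- breaks down precisely because the model is only 1-independent: you cannot force a neighbourhood of the origin to be open with probability bounded away from zero, since for a set of $k$ mutually adjacent edges the only generally valid lower bound on $\Prb(\text{all open})$ is $1-kq$, which is vacuous for large $k$. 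Moreover the per-contour bound $q^{n/2}$ cannot be improved in general: it is attained by the 1-independent model in which each vertex is independently ``bad'' with probability $\sqrt{q}$ and an edge is closed iff both endpoints are bad. So the real content of \cite{BBW05} is a genuinely different counting/renormalisation argument, not a sharpening of the two ingredients you name. The second half of your sketch (open circuits around $B$) inherits the same numerical problem, and the Borel--Cantelli step is not spelled out. In short, your outline does yield the qualitative statement that 1-independent bond percolation on $\Z^2$ occurs for $p$ sufficiently close to $1$, but not the theorem as stated, whose entire point is the explicit constant $0.8639$.
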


We will use the first part of the theorem for our lower bounds, and the second part for our upper bounds.

\begin{figure}
\centerline{\includegraphics[width=0.75\columnwidth]{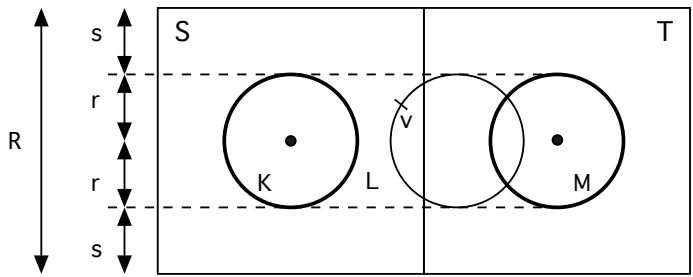}}
\caption{The rolling ball method}
\label{f:1}
\end{figure}

For simplicity, let us first consider the case of ${\bf B}$-percolation. Later, we will indicate the modifications
necessary for the other types.

Consider the rectangular region consisting of two adjacent squares $S,T$ shown in \Fg{1}. Both $S$ and $T$ have side
length $2r+2s$, and $K$ and $M$ have radius $r$ and are placed at the centers of $S$ and $T$ respectively, so that
they each lie at distance $s$ from the boundaries of $S$ and $T$. The parameters $r$ and $s$ will be chosen later.
Also, $T$ may be to the right, left, above or below $S$,
in which case \Fg{1} should be rotated accordingly. We define the {\em basic good event} $E_{{\bf B},S,T}$ to be the
event that every black point $u$ in the central disc $K$ of $S$ is joined to at least one black point in the central
disc $M$ of $T$ by a path in $G'_{\rm sec}$, regardless of the state of the Poisson processes outside $S\cup T$, and
moreover that $K$ contains at least one black point.

Now consider the following percolation model on $\Z^2$. Each vertex $(i,j)\in\Z^2$ corresponds to a square
$[Ri,R(i+1)]\times[Rj,R(j+1)]$ in $\R^2$, where $R=2r+2s$, and an edge is open between adjacent vertices (corresponding
to squares $S$ and $T$) if {\em both} the corresponding basic good events $E_{{\bf B},S,T}$ and $E_{{\bf B},T,S}$ hold.
Note that this is a 1-independent model on $\Z^2$, and that percolation in this model implies percolation in the
original one. Since, by \Th{onedep}, the critical probability for any 1-independent model is
at most 0.8639, if we can show that, for some $r,s,\lambda$,
\[
\Prb(E_{{\bf B},S,T})\ge 0.93195
\]
it will follow that
\[
\Prb(E_{{\bf B},S,T}\cap E_{{\bf B},T,S})\ge 0.8639
\]
by symmetry, and hence we will have shown that $\lambda_{\bf B}\ge\lambda$.

To bound the probability that a basic good event fails, we proceed as follows. Let $K,L$ and $M$ be as in \Fg{1}.
($L$ is the region between the two discs $K$ and $M$.) Define $E'_{{\bf B},S,T}$ to be the event that for every
black point $v\in K\cup L$, there is a black point $u$ such that i) $uv\in E(G'_{\rm sec})$ ii) $\|u-v\|\le s$ and
iii) $u\in D_v$, where $D_v$ is the disc of radius $r$ inside $K\cup L\cup M$ with $v$ on its $K$-side boundary
(the middle disc in \Fg{1}). If we let $F_{S}$ be the event that there is at least one black point in $K$,
then we have (see~\cite{BBknnp} for background)
\[
E'_{{\bf B},S,T}\cap F_S\subset E_{{\bf B},S,T}
\]
and so
\[
E_{{\bf B},S,T}^C\subset (E'_{{\bf B},S,T})^C\cup F_S^C
\]
so that, since $\Prb((E'_{{\bf B},S,T})^C)$ is bounded by the expected number of points $v$ such that i), ii)
or iii) fail,
\[
\Prb(E_{{\bf B},S,T}^C)\le e^{-\pi r^2}+2r(2r+2s)p_{{\bf B},r,s}
\]
where $p_{{\bf B},r,s}$ is the probability that i), ii) or iii) fail for some fixed $v$. Note that this probability
is independent of the position of $v$.

To bound $p_{{\bf B},r,s}$, we consider the probability that the vertex $u$ closest to $v$ inside $D_v$ fails
one of i), ii) or iii) (or does not exist). Suppose some $u\in D_v$ does exist, and write $t=\|u-v\|,A=B(v,t),
B=B(v,t)\cap D_v$ and $C=B(u,t)$. Let $p_{{\bf B}}(u)$ be the probability that $u$ is the closest point to $v$ inside
$D_v$, but that $uv\not\in G'_{\rm sec}$. Then
\begin{equation}\label{b}
p_{\bf B}(u)=(1-e^{-\lambda|A\cup C|})e^{-|B|}
\end{equation}
and also
\[
p_{{\bf B},r,s}\le e^{-|D_v\cap B(v,s)|}+\int_{u\in D_v\cap B(v,s)}p_{\bf B}(u)\,du
\]
so that
\begin{equation}\label{big}\small{
\Prb(E_{{\bf B},S,T}^C)\le e^{-\pi r^2}+2r(2r+2s)\left(e^{-|D_v\cap B(v,s)|}+\int_{u\in D_v\cap B(v,s)}
(1-e^{-\lambda|A\cup C|})e^{-|B|}\,du\right)}
\end{equation}
and the right hand side can be minimized (using a computer) over all $r$ and $s$, with $\lambda$ fixed.
The result for $\lambda=0.0005$ is shown in Table 1, in row ${\bf B}$.

The calculation for the cases ${\bf U}$ and ${\bf O}$ is exactly analogous, using the graphs $G_{\rm sec}$ and
$\vec{G}_{\rm sec}$ respectively. The analogues of~\eqref{b} are
\begin{equation}\label{u}
p_{\bf U}(u)=(1-e^{-\lambda|A|}-e^{-\lambda|C|}+e^{-\lambda|A\cup C|})e^{-|B|}
\end{equation}
and
\begin{equation}\label{o}
p_{\bf O}(u)=(1-e^{-\lambda|A|})e^{-|B|}
\end{equation}
respectively, and the natural analogue of~\eqref{big} applies. The results of the optimization, again obtained using
a computer, are shown in Table 1.

\begin{table}
\[\begin{array}{|c|c|c|c|c|}\hline
{\bf X}&\lambda&r&s&p\\\hline
{\bf U}&0.002&1.659&3.15&0.0669\\
{\bf O}&0.0008&1.658&3.15&0.0677\\
{\bf B}&0.0005&1.657&3.15&0.0680\\
\hline
\end{array}\]
\caption{Upper bounds on $p=\min_{r,s}\Prb(E_{{\bf X},S,T}^C)$. (All values of $p$ rounded up.)}
\end{table}

As proved in~\cite{BBknnp}, the bound for $\lambda_{\bf O}$ in fact applies to $\lambda_{\bf S}$ and $\lambda_{\bf I}$
as well (see ~\cite{BBknnp} for a proof). In conclusion, we have proved the following theorem.

\begin{theorem}
$\lambda_{\bf U}\ge 0.002,\lambda_{\bf O},\lambda_{\bf I},\lambda_{\bf S}\ge 0.0008$ and~$\lambda_{\bf B}\ge~0.0005.$\qed
\end{theorem}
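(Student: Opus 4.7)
The plan is to apply the rolling ball framework already established in the paper to each of the three direct cases (${\bf U}$, ${\bf O}$, ${\bf B}$), and then to invoke the argument of~\cite{BBknnp} to extend the ${\bf O}$-bound to ${\bf I}$ and ${\bf S}$. For each ${\bf X}\in\{{\bf U},{\bf O},{\bf B}\}$ and the specific value of $\lambda$ listed in Table~1, I would take the corresponding formula for $p_{\bf X}(u)$---equation~\eqref{b}, \eqref{u}, or~\eqref{o}---substitute it into the analogue of~\eqref{big}, and numerically minimize the right-hand side over $r,s>0$. The minima are reported in Table~1, with $(r,s)\approx(1.66,3.15)$ and $p\le 0.0669,0.0677,0.0680$ respectively.

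Next I would verify the crucial numerical inequality $2p\le 1-0.8639=0.1361$ in each row, which is where the precise numerics matter: $2(0.0669)=0.1338$, $2(0.0677)=0.1354$, $2(0.0680)=0.1360$, each safely below the threshold. Since the square configuration underlying the basic good event is symmetric under the four rotations placing $T$ above, below, left, or right of $S$, a union bound then yields $\Prb(E_{{\bf X},S,T}\cap E_{{\bf X},T,S})\ge 1-2p\ge 0.8639$. Declaring the $\Z^2$-edge between the lattice points of $S$ and $T$ open exactly when both good events hold produces a 1-independent bond percolation model on $\Z^2$ in which every edge is open with probability at least $0.8639$, so \Th{onedep} gives an infinite open component almost surely.

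It remains to check that the 1-independent lattice percolation indeed implies the intended percolation in $\vec{G}_{\rm sec}$. This is built into the basic good event: $E_{{\bf X},S,T}$ was engineered so that any black point in the central disc $K$ of $S$ is connected (by a suitable ${\bf X}$-path) to some black point in the central disc $M$ of $T$, regardless of the configuration outside $S\cup T$. Composing good edges along an infinite open path in $\Z^2$ therefore produces, deterministically from the Poisson configuration, an infinite ${\bf X}$-connected subgraph of the relevant secrecy graph, and together with the guaranteed nonemptiness of $K$ this shows $p_{\bf X}(\lambda)=1$. This yields $\lambda_{\bf U}\ge 0.002$, $\lambda_{\bf O}\ge 0.0008$ and $\lambda_{\bf B}\ge 0.0005$. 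Finally, for ${\bf I}$ and ${\bf S}$, the argument of~\cite{BBknnp} shows that the exact same rolling-ball construction which produces an infinite out-component simultaneously produces an infinite strongly connected (and hence in-) component; no reoptimization is needed, and the bound $\lambda_{\bf O}\ge 0.0008$ transfers directly.

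The main obstacle is purely computational: the integrand in~\eqref{big} involves the areas $|A|,|B|,|C|,|A\cup C|$ of intersections of three discs of two different radii with $D_v$, which are elementary but tedious functions of $r,s,t=\|u-v\|$ and the angle parametrizing $u\in D_v\cap B(v,s)$. Once these areas are coded, the minimization over $(r,s)$ is a straightforward two-dimensional numerical search; the delicate point is that the margin in the inequality $2p<0.1361$ is very small (particularly for ${\bf B}$, where $2p=0.1360$), so the numerics must be carried out carefully enough that rounding does not compromise the bound.
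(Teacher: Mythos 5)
Your proposal follows the paper's proof essentially verbatim: the same basic good events $E_{{\bf X},S,T}$, the same substitution of $p_{\bf U}(u)$, $p_{\bf O}(u)$, $p_{\bf B}(u)$ into the analogue of~\eqref{big}, the same numerical minimization over $(r,s)$ yielding the Table~1 values, the same symmetry/union-bound step reducing the requirement to $2p\le 0.1361$ (equivalently $\Prb(E_{{\bf X},S,T})\ge 0.93195$), the same appeal to \Th{onedep} for the 1-independent model, and the same citation of~\cite{BBknnp} to transfer the ${\bf O}$-bound to ${\bf I}$ and ${\bf S}$. The argument is correct and no further comparison is needed.
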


\subsection{High confidence results (\cite{BBknnp})}

This method was used in~\cite{BBknnp} to give both upper and lower bounds for percolation thresholds in the $k$-nearest
neighbour model. It involves computing a certain high dimensional integral using Monte Carlo methods, and so is not fully
rigorous. The approach carries over essentially completely for the secrecy graph.

The lower bound method (corresponding to the upper bound method for the $k$-nearest neighbour model) may be summarized
as follows. Given a trial value of $\lambda$, which we wish to show is a lower bound on one of the percolation thresholds
$\lambda_{\bf U},\lambda_{\bf O}$ or $\lambda_{\bf B}$, we choose trial values of $r$ and $s$. Then we generate a random
instance of $\cP\cup\cP'$ inside $S\cup T$ (see \Fg{1}) and test for the following conditions: i) for more than half of
the black points $v\in K$, there are paths (in $G_{\rm sec},\vec{G}_{\rm sec}$ or $G'_{\rm sec}$ for the cases
${\bf X}={\bf U},{\bf O},{\bf B}$) to more than half the black points in $M$, regardless of the state of $\cP\cup\cP'$
outside $S\cup T$; ii) for more than half of the black points $v\in M$, there are paths to more than half the black points
in $K$, regardless of the state of $\cP\cup\cP'$ outside $S\cup T$. As before, it is clear that this is a 1-independent
model on the bonds joining adjacent squares, and that percolation in this model implies percolation in the original one.
Consequently, if these conditions hold with probability at least 0.8639, then percolation occurs. The condition that the
path should be independent of the process outside $S\cup T$ is simply obtained by ignoring any edges of
$uv\in E(\vec{G}_{\rm sec}(S\cup T))$ where $\|u-v\|>{\rm dist}(u,\partial(S\cup T))$, since only edges $uv$ with
$\|u-v\|\le{\rm dist}(u,\partial(S\cup T))$ are guaranteed to exist in $\vec{G}_{\rm sec}$.

The probability that conditions i) and ii) are satisfied can be expressed as a complicated multiple integral, whose value
we would like to be greater than 0.8639, for some $r$ and $s$. This is the integral we estimate using Monte Carlo methods.
Using a computer program we generated many instances, and counted the proportion of times these conditions held.
From these we calculated the confidence level, i.e., the probability $p$ that these results (or better) could be
obtained, if the true value of the integral was less than 0.8639. In all cases $p$ was less than $10^{-25}$: the detailed
results appear in Table 2. It was shown in~\cite{BBknnp} that the method for the ${\bf X}={\bf O}$ case actually applies to
the cases ${\bf X}={\bf S}$ and ${\bf X}={\bf I}$ as well, so that the results obtained are as follows.

\begin{theorem}\label{t:lb}
{\small With high confidence, $\lambda_{\bf B}\ge 0.09, \lambda_{\bf O},\lambda_{\bf I},\lambda_{\bf S}\ge 0.11,
\lambda_{\bf U}\ge 0.20$.}\qed
\end{theorem}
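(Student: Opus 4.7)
The plan is to deploy a 1-independent bond percolation argument on $\Z^2$ with the same two-square geometry as in \Fg{1}, but with a much less pessimistic definition of the basic good event, evaluating the resulting probability by Monte Carlo simulation rather than by crude analytic bounds as in~\eqref{big}. Specifically, for the case ${\bf X}\in\{{\bf U},{\bf O},{\bf B}\}$, I would fix trial values of $r,s,\lambda$, tile $\R^2$ by squares of side $R=2r+2s$, and declare the bond between the centers of adjacent squares $S,T$ open when, looking only inside $S\cup T$, more than half of the black points in the central disc $K$ of $S$ are joined, via the appropriate graph ($G_{\rm sec},\vec{G}_{\rm sec}$ or $G'_{\rm sec}$), to more than half of the black points in the central disc $M$ of $T$, \emph{and} the symmetric condition holds with $S,T$ swapped. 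The crucial ``safe edge'' trick is to throw away every candidate edge $uv$ with $\|u-v\|>\mathrm{dist}(u,\partial(S\cup T))$, because such an edge could in principle be killed by a red point lying outside $S\cup T$; restricting to safe edges makes the good event determined by $(\cP\cup\cP')\cap(S\cup T)$ and hence makes the bond model genuinely 1-independent.

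Once this is in place, \Th{onedep} reduces the task to showing that the open-bond probability $q(\lambda,r,s)$ exceeds the 1-independent critical value $0.8639$. The key step is then to choose $r$ and $s$ sensibly (large enough that $K$ and $M$ are typically well-populated and connected through $L$, small enough that $R$ is not too huge) and estimate $q(\lambda,r,s)$ by generating many independent samples of $(\cP\cup\cP')\cap(S\cup T)$, running a deterministic path-finding algorithm on the induced safe subgraph, and recording the empirical fraction $\hat q$ of trials in which both conditions hold. The confidence level $p$ that $q<0.8639$ given the observed $\hat q$ from $n$ trials is then controlled by a one-sided Chernoff/Hoeffding tail bound applied to $\mathrm{Bin}(n,0.8639)$; pushing $n$ into the millions drives $p$ well below $10^{-25}$ for the tabulated parameters.

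Finally, to upgrade the ${\bf O}$-bound to ${\bf I}$ and ${\bf S}$, I would invoke the observation from~\cite{BBknnp} that the safe paths found in the ${\bf O}$-simulation actually yield directed paths in \emph{both} orientations between ``most'' black points in $K$ and $M$ (because the truncation to safe edges is symmetric in the role of endpoints), so the same simulation certifies percolation in $\vec{G}_{\rm sec}$ with strongly connected clusters, giving the bound simultaneously for ${\bf I}$ and ${\bf S}$. Putting these pieces together with the numerical outputs summarized in Table~2 yields the stated thresholds.

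The main obstacle is not conceptual but computational and statistical: one must carefully tune $(r,s)$ so that $q(\lambda,r,s)$ clears $0.8639$ by a margin wide enough that a feasible number of Monte Carlo trials gives an astronomically small $p$-value, and one must implement the path-finding in each sample efficiently enough that the simulation is tractable. A secondary subtlety is that, strictly speaking, the method is not fully rigorous because Monte Carlo estimates the underlying high-dimensional integral rather than evaluating it exactly; this is why the statement is phrased as ``with high confidence'' and must carry the explicit confidence-level caveat.
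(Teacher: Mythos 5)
Your proposal follows essentially the same route as the paper: the same two-square 1-independent bond model, the same ``more than half of $K$ connected to more than half of $M$'' good event, the same safe-edge truncation $\|u-v\|\le\mathrm{dist}(u,\partial(S\cup T))$ to guarantee 1-independence, the same appeal to \Th{onedep} with threshold $0.8639$, and the same Monte Carlo estimation with a binomial tail bound for the confidence level. The only substantive difference is cosmetic: your parenthetical explanation for why the ${\bf O}$-simulation also certifies the ${\bf I}$ and ${\bf S}$ bounds is your own gloss, whereas the paper (like you, ultimately) simply defers to the argument in~\cite{BBknnp} for that step.
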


The upper bound method (corresponding to the lower bound method for the $k$-nearest neighbour model) is as follows.
For suitable $r$ and $s$, we generate instances of $\cP$ and $\cP'$ in $S\cup T$, and check whether, regardless of the
state of the processes outside $S\cup T$, there is no path (in $G_{\rm sec},\vec{G}_{\rm sec}$ or $G'_{\rm sec}$ for
the cases ${\bf X}={\bf U},{\bf O},{\bf B}$) from outside $S\cup T$ that crosses the line segment joining the center
of $S$ to the center of $T$ (see \Fg{2}). We define a 1-independent percolation model on $\Z^2$ by declaring an edge
open if this condition holds for the corresponding rectangle $S\cup T$. If an edge is open with probability at least
0.8639, then, from \Th{onedep}, there are open cycles surrounding any bounded region of the plane. Consequently, if
there was an infinite ${\bf X}$-component starting in some such bounded region, it would have to cross an open cycle,
and in particular cross the central line segment in one of the rectangles $S\cup T$ corresponding to an open edge in
this cycle. This contradicts the condition for that edge to be open, and so percolation cannot occur if the edges are
open with probability at least 0.8639.

It remains to specify how we tested whether an edge of a path (in $G_{\rm sec},\vec{G}_{\rm sec}$ or $G'_{\rm sec}$ for
the cases ${\bf X}={\bf U},{\bf O},{\bf B}$) could come from outside $S\cup T$ to some $v\in S\cup T$. In these cases,
we must find possible neighbours within $S\cup T$ of every possible point outside $S\cup T$. To do this, we used the
following procedure.

We will define a region $R'$, determined by the positions of the red points, so that any black point that is joined
to a point outside of $S\cup T$ must lie in $R'$.
First we define the subset $R\subset R'$ to be the union of various
half-discs $R_i$, described as follows. A point $x$ moving along the boundary of $S\cup T$ has, at almost every position,
exactly one nearest neighbour in $\cP'\cap(S\cup T)$. At some places, there will be a tie for the nearest neighbour of $x$,
so that $\|x-a\|=\|x-b\|$ for some points $a,b\in\cP'$. Draw the disc through $a$ and $b$ and centered at $x$,
and let $R_i$ be the intersection of this disc with $S\cup T$. $R$ is just the union of all such regions $R_i$,
and $R'$ is the union of $R$ together with the regions at the corners of $S\cup T$ which lie outside the $R_i$ (see \Fg{3}).

\begin{figure}
\centerline{\includegraphics[width=0.4\columnwidth]{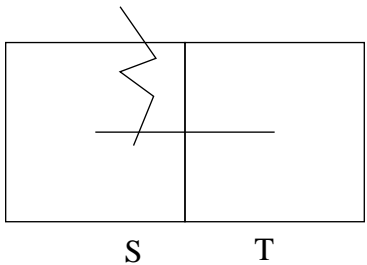}}
\caption{Forbidden path for upper bound method}
\label{f:2}
\end{figure}

\begin{figure}
\centerline{\includegraphics[width=0.8\columnwidth]{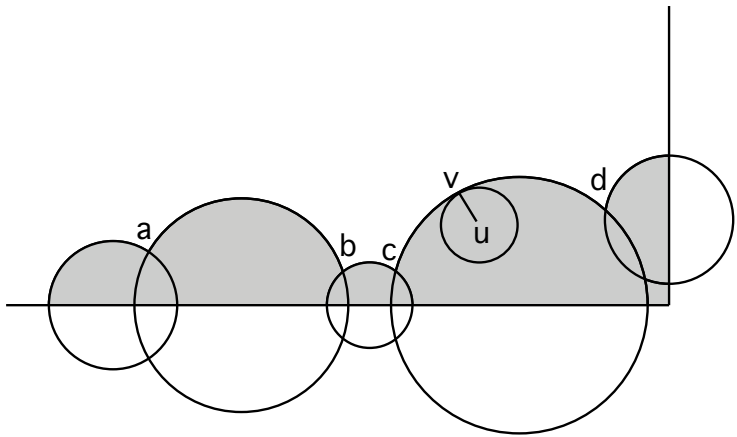}}
\caption{The construction of $R'$ (shaded)}
\label{f:3}
\end{figure}

To check that this method works, suppose that there is an edge $\vec{xy}\in E(\vec{G}_{\rm sec})$, where
$x\not\in S\cup T$ and $y\in (S\cup T)\setminus R$. Let $u$ be the point on $\partial(S\cup T)$ on the line joining
$x$ and $y$. Then, if $B(u,\|u-y\|)$ contains a red point $a$, so does $B(x,\|x-y\|)$, since
\[
\|x-a\|\le \|x-u\|+\|u-a\|< \|x-u\|+\|u-y\|=\|x-y\|
\]
so that it is enough to assume that $x=u$. Moreover, let $v$ be the point on $\partial R$ on the line joining
$u$ and $y$. If $B(u,\|u-v\|)$ contains a red point $b$, so does $B(u,\|u-y\|)$, since $B(u,\|u-y\|)$ contains
$B(u,\|u-v\|)$. Hence we may also assume that $y=v$. Now, with $u$ fixed, we may assume that $v$ is the closest
point of $\partial R$ to $u$, which we may also assume does not coincide with the location of a red point. Draw
the disc $B(u,\|u-v\|)$. By construction, this disc is tangent to one of the half-discs $R_i$, centered at $z$,
say, and has a strictly smaller radius than that of $R_i$, with probability 1. Therefore, its center, $u$, lies
in the interior of the line segment joining $z$ to $v$. Consequently, $u\in S\cup T$, which is a contradiction.
\Fg{3} shows that the three conditions i) $v$ is the closest point of $\partial R$ to $u$ ii)
$\|u-v\| < {\rm min}(\|u-a\|,\|u-b\|)$ and iii) $u\in\partial (S\cup T)$ are incompatible, by illustrating a
typical situation where i) and ii) are satisfied.

In the simulations, points were placed randomly in $S\cup T$, all black points in $R'$ were assumed to be joined
to points outside of $S\cup T$, and edges in $\vec{G}_{\rm sec}$ were determined assuming that there were no red
points outside $S\cup T$. The results of these simulations are also shown in Table 2, and so we have the following
result.

\begin{theorem}\label{t:ub}
{\small With high confidence, $\lambda_{\bf B}\le 0.13, \lambda_{\bf O},\lambda_{\bf I},\lambda_{\bf S}\le 0.17,
\lambda_{\bf U}\le 0.27$.}\qed
\end{theorem}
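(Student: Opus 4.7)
The plan is to combine the forbidden-region construction of $R'$ with a Monte Carlo verification of a 1-independent bond percolation criterion, as foreshadowed in the discussion preceding the theorem. First I would tile $\R^2$ by axis-parallel squares of side $R=2r+2s$, group each pair of horizontally or vertically adjacent squares into a rectangle $S\cup T$ as in \Fg{1}, and declare the edge of $\Z^2$ joining two adjacent squares to be open iff, using only the points of $\cP\cup\cP'$ inside $S\cup T$, there is no path in the appropriate graph ($G_{\rm sec}$, $\vec{G}_{\rm sec}$ or $G'_{\rm sec}$ for the cases ${\bf U},{\bf O},{\bf B}$ respectively) that starts outside $S\cup T$ and crosses the segment joining the centers of $S$ and $T$. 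Since openness depends only on the Poisson processes inside $S\cup T$, the resulting model on $\Z^2$ is 1-independent in the sense defined above.

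The geometric heart of the argument is the verification, illustrated in \Fg{3}, that every black vertex $y\in(S\cup T)\setminus R'$ has no incoming edge from outside $S\cup T$. I would reprise the contraction argument already sketched: given an edge $\vec{xy}$ with $x\notin S\cup T$, first slide $x$ along the segment $xy$ to the boundary point $u\in\partial(S\cup T)$, which only tightens the secrecy constraint; then slide $y$ to the first crossing $v$ of $\partial R$; finally note that $v$ lies on some half-disc $R_i$ centered at $z$, so $B(u,\|u-v\|)$ is internally tangent to $R_i$ with strictly smaller radius, forcing $u$ to lie in the interior of the segment $zv$ and hence in $S\cup T$, a contradiction. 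With this in hand, in simulation one may safely treat every black point of $R'$ as already connected to the exterior and test for forbidden crossings using only the internal Poisson processes.

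Once the single-edge open probability is a well-defined integral over the configurations in $S\cup T$, I would estimate it by sampling many independent instances of $\cP\cup\cP'$ in $S\cup T$, running the crossing test, and forming a one-sided binomial confidence bound against the threshold $0.8639$. When the observed success rate is high enough that the null hypothesis ``true probability $\le 0.8639$'' is rejected at confidence at least $1-10^{-25}$, the second half of \Th{onedep} guarantees that almost surely every bounded region of the plane is surrounded by a cycle of open edges, so any infinite ${\bf X}$-component starting in such a region would have to cross the central segment of some rectangle corresponding to an open edge, contradicting openness. This yields $\lambda\ge\lambda_{\bf X}$ with high confidence, and the cases ${\bf S}$ and ${\bf I}$ reduce to ${\bf O}$ by the argument of~\cite{BBknnp}; numerical optimization of $r$ and $s$ for each case then produces the claimed values $0.27$, $0.17$ and $0.13$. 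The main obstacle is the geometric step: the validity of every numerical bound stands or falls on the claim that $R'$ really captures every possible foreign endpoint inside $S\cup T$, so the contraction argument has to be executed carefully and checked in each of the three variants of the secrecy graph before the Monte Carlo output can be trusted.
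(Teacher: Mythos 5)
Your proposal follows essentially the same route as the paper: the same 1-independent bond model on $\Z^2$ with edges declared open when no path in the relevant secrecy graph crosses the central segment of $S\cup T$, the same construction and tangency argument for the forbidden region $R'$, the same appeal to the second part of \Th{onedep} to produce blocking cycles, and the same Monte Carlo confidence computation against the threshold $0.8639$, with the ${\bf S}$ and ${\bf I}$ cases reduced to ${\bf O}$ as in~\cite{BBknnp}. This matches the paper's argument, so no further comparison is needed.
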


\begin{table}
\[\begin{array}{|c|c|c|c|c|c|c|c|}\hline
{\bf X}&{\rm bound}&\lambda&r&s&{\rm successes}&{\rm trials}&{\rm confidence}\\\hline
{\bf U}&{\rm lower}&0.20&90&10&1480&1500&10^{-66}\\
{\bf O}&{\rm lower}&0.11&60&0&963&1000&10^{-25}\\
{\bf B}&{\rm lower}&0.09&80&0&2159&2250&10^{-51}\\\hline
{\bf U}&{\rm upper}&0.27&110&0&4296&4600&10^{-51}\\
{\bf O}&{\rm upper}&0.17&110&0&3689&4000&10^{-25}\\
{\bf B}&{\rm upper}&0.13&125&0&6226&6750&10^{-45}\\
\hline
\end{array}\]
\caption{Results of Monte-Carlo simulations. (All confidences rounded up.)}
\end{table}

\section{Uniqueness of the infinite cluster}

Uniqueness of the infinite cluster above the percolation threshold was proved by Harris~\cite{Ha} for bond percolation in $\Z^2$,
by Aizenman, Kesten and Newman~\cite{AKN} for connected, transitive and amenable graphs, by Meester and Roy~\cite{MR} for the Gilbert
model, and by H\"aggstr\"om and Meester~\cite{HaMe} for the $k$-nearest neighbour model. The last two results were obtained by modifying
a very short and elegant argument of Burton and Keane~\cite{BK}, which was originally applied to give a second proof of the
Aizenman--Kesten--Newman theorem. The Burton--Keane argument goes through for the secrecy graph, with a considerably simpler proof
than in~\cite{HaMe}. Before presenting it, we make a few preliminary remarks.

There are three main ingredients in proving the uniqueness of the infinite cluster. One is {\em ergodicity}, which allows us to show that
the number of infinite components is almost surely constant (this constant might be $\infty$). The second is the {\em local modifier},
which works as follows. Suppose we know that some event $E$ occurs with positive probability. Suppose also that, by removing a finite
number of points from any instance of $\cP\cup\cP'$ in which $E$ occurs, we get a configuration in which some other event $F$ always
occurs. Then also $\Prb(F)>0$. This is proved using coupling. The third ingredient is the {\em trifurcation argument}, which, roughly
speaking, shows that the probability of having some infinite component with three distinct ``branches" going off to infinity is zero.
Since the ergodicity and trifurcation arguments are fairly standard (see~\cite{BoRi,HaJo,MRbook} for instance), we will simply state
their implications, without proof, and concentrate on the local modifier.

To keep things simple, we will focus on the case ${\bf X}={\bf B}$. In other words, we will work with the graph $G'_{{\rm sec}}$
of bidirectional edges.
From now on, we will call this graph $G$. Versions of the result, with almost identical proofs, exist for the cases ${\bf X}={\bf U}$
and ${\bf X}={\bf S}$; when ${\bf X}={\bf I}$ or ${\bf X}={\bf O}$, things are more complicated, since two maximal infinite components
might intersect.

First then, we describe precisely the respective end results of the ergodicity and trifurcation arguments.

\begin{lemma}\label{l:erg}
For each value of $d$, and for each $\lambda>0$, the number of infinite components in the graph $G=G'_{{\rm sec}}$ is
almost surely constant. (This constant might be $\infty$.)\qed
\end{lemma}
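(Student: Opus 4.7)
The plan is to deduce this from the ergodicity of the underlying Poisson point processes under translations of $\R^d$. I would work on the standard probability space of pairs $\omega=(\cP,\cP')$ of independent Poisson processes of intensities $1$ and $\lambda$, and let the translation group $\R^d$ act by $\tau_t\omega=(\cP-t,\cP'-t)$. Classically, each Poisson process is ergodic with respect to this action, and since $\cP$ and $\cP'$ are independent, the joint process is ergodic as well.

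Let $N(\omega)$ denote the (extended-integer-valued) number of infinite components of $G=G'_{\rm sec}$. First I would check that $N$ is measurable: since $\cP$ is almost surely countable and, for any $x,y\in\cP$, the event ``$x$ and $y$ lie in a common component of $G$'' can be expressed as a countable union of events depending on the positions of finitely many points of $\cP\cup\cP'$, the partition of $\cP$ into components is measurable, and $N$ is a measurable function with values in $\{0,1,2,\ldots,\infty\}$.

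The main step is translation-invariance. The graph $G'_{\rm sec}$ is defined purely in terms of pairwise distances between points of $\cP$ and the condition that certain open balls $B(x,\|x-y\|)$ contain no point of $\cP'$, both of which are preserved under simultaneously translating $\cP$ and $\cP'$ by the same vector. Hence $G(\tau_t\omega)$ is simply the translate of $G(\omega)$ by $-t$, so it has the same number of infinite components. Therefore $N\circ\tau_t=N$ for every $t\in\R^d$, and by the ergodicity noted above $N$ must be almost surely constant.

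There is no real obstacle here: the only nontrivial ingredient is the ergodicity of a Poisson process under the translation group, which is classical and already implicit in the earlier ergodic $0/1$ statement (\Th{01}); everything else is bookkeeping of the sort carried out in the references \cite{BoRi,HaJo,MRbook} cited just before the lemma.
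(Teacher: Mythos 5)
Your argument is correct and is exactly the standard ergodicity argument the paper has in mind: the paper deliberately omits the proof of this lemma, noting only that it follows from ergodicity of the Poisson process under translations and citing \cite{BoRi,HaJo,MRbook}. Your write-up (translation invariance of $N$ plus ergodicity of the independent pair $(\cP,\cP')$, with the measurability check) fills in precisely that standard argument.
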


\begin{lemma}\label{l:tri}
Pick $r>0$ and $x\in R^d$. Let $T(x,r)$ be the event that the ball $B(x,r)$ is intersected by an infinite component $C$ of
$G=G'_{{\rm sec}}$ in such a way that, if all edges of $C$ intersecting $B(x,r)$ are removed, $C$ falls apart into a number of
components, of which at least three are infinite. Then $\Prb(T(x,r))=0$.\qed
\end{lemma}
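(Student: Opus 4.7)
The plan is to adapt the Burton--Keane trifurcation argument to the continuum secrecy graph. Suppose for contradiction that $p := \Prb(T(0,r)) > 0$; by translation invariance of $\cP$ and $\cP'$, $\Prb(T(y,r)) = p$ for every $y \in \R^d$. The strategy is to bound the number of ``trifurcation balls'' in $\Lambda_n := [-n,n]^d$ both from above and below: linearity of expectation yields a lower bound of order $n^d$, while a combinatorial forest argument pinches the same quantity from above by the number of edges crossing $\partial \Lambda_n$, which is only $O(n^{d-1})$. The contradiction forces $p = 0$.

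First I would choose a $3r$-separated set $\mathcal{Y}_n \subset \Lambda_{n-r}$ with $|\mathcal{Y}_n| = \Theta(n^d)$, so that the balls $\{B(y,r) : y \in \mathcal{Y}_n\}$ are pairwise disjoint and contained in $\Lambda_n$. Let $K_n := \#\{y \in \mathcal{Y}_n : T(y,r)\text{ occurs}\}$; then $\E K_n = p\,|\mathcal{Y}_n| = \Theta(n^d)$.

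For the matching upper bound I would run the standard Burton--Keane bookkeeping. For each infinite cluster $C$ of $G = G'_{\rm sec}$ that meets an active trifurcation ball, remove from $C$ all edges intersecting any such $B(y,r)$ together with all edges of $C$ crossing $\partial\Lambda_n$; let $\mathcal{Q}$ be the resulting collection of connected fragments inside $\Lambda_n$. Build an auxiliary multigraph $H_n$ whose vertices are the active trifurcation balls of $\mathcal{Y}_n$ together with the ``exits'' (edges of $G$ crossing $\partial\Lambda_n$), joining two such vertices by an edge of $H_n$ for each fragment in $\mathcal{Q}$ that they both meet. By the definition of $T(y,r)$, each trifurcation ball has $H_n$-degree at least $3$, while each exit has degree exactly $1$. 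The classical tree inequality ``leaves $\ge$ internal vertices $+\,2\cdot$components'', applied to a spanning forest of $H_n$, then yields $K_n \le X_n$, where $X_n$ is the total number of exits. Finally, any exit edge $uv$ with $u \in \cP \cap \Lambda_n$ and $v \in \cP \setminus \Lambda_n$ satisfies $|u-v| \le d(u,\cP')$, and $\Prb(d(u,\cP') > t) = \exp(-\lambda\alpha_d t^d)$ has a Weibull tail. A Campbell--Mecke calculation then bounds $\E X_n$ by the boundary-layer integral $\int_{\Lambda_n}\!\int_{\R^d \setminus \Lambda_n} \exp(-\lambda\alpha_d |u-w|^d)\,dw\,du = O(n^{d-1})$. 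Combining, $p\,|\mathcal{Y}_n| = \E K_n \le \E X_n = O(n^{d-1})$, contradicting $|\mathcal{Y}_n| = \Theta(n^d)$ for $n$ large.

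The step I expect to be the main technical obstacle is verifying that $H_n$ really has the claimed degree structure. The definition of $T(y,r)$ only guarantees three infinite branches of $C$ after cutting edges inside $B(y,r)$ alone; when we additionally cut inside other trifurcation balls and across $\partial\Lambda_n$, each such branch may be fragmented further, and one must check that each active trifurcation ball still touches three \emph{distinct} fragments of $\mathcal{Q}$ so as to have $H_n$-degree at least $3$. Once this is secured, the tree-counting step is immediate; the exit-edge bound, although it must cope with the unbounded range of edges in $\vec{G}_{\rm sec}$, is neutralised by the Weibull concentration of $d(u,\cP')$ and is essentially routine.
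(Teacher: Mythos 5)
The paper does not actually prove this lemma: it is stated without proof, the authors saying explicitly that the trifurcation argument is standard, pointing to Lemma 3.2 of Meester and Roy, and giving only a one-sentence sketch (positive trifurcation density in a large box is incompatible with the combinatorial count). Your proposal is precisely the Burton--Keane argument they are invoking, and you supply the one genuinely model-specific ingredient correctly: since edges of $G'_{\rm sec}$ have unbounded length, the number of edges crossing $\partial\Lambda_n$ is not deterministically $O(n^{d-1})$ as in the lattice or Gilbert settings, and your Campbell--Mecke bound using $\Prb\bigl(\|u-v\|\le \mathrm{dist}(u,\cP')\bigr)\le \exp(-\lambda\alpha_d\|u-v\|^d)$ is exactly what is needed to get $\E X_n=O(n^{d-1})$. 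The obstacle you flag yourself --- whether the three branches remain in distinct fragments after the additional cuts --- is in fact unproblematic: removing further edges only refines the decomposition, the three branches are disjoint to begin with, and each branch, being infinite, either contains an exit or is severed at another trifurcation ball, so each contributes at least one incidence at $y$.

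The step that is not right as written is ``the classical tree inequality applied to a spanning forest of $H_n$''. Whenever two branches of a trifurcation ball $y$ reconnect downstream through other balls or shared fragments, $H_n$ contains a cycle through $y$, and a spanning forest may then give $y$ degree $2$ or less; the inequality (leaves at least internal vertices plus two per component) then says nothing about $K_n$. Likewise your exits need not have degree exactly $1$ in $H_n$ as defined. This is precisely the point that the compatible-partitions lemma of Burton and Keane --- equivalently Lemma 3.2 of Meester and Roy, the very reference the paper gives --- is designed to handle: to each active trifurcation ball in a cluster $C$ assign the partition of the exits of $C$ induced by its (at least three) infinite branches, verify that these partitions are pairwise compatible, and conclude that the number of such balls in $C$ is at most the number of exits of $C$ minus two. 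With that substitution for your spanning-forest step, the proof is complete and is the intended one.
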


Loosely speaking, if $\Prb(T(x,r))$ were strictly positive, then the expected number of occurrences of $T(x,r)$ in a large box
$A$ would be large, which in turn would mean that, with positive probability, the density of black points in $A$ would be at
least 2. The latter implication is purely combinatorial - see Lemma 3.2 of~\cite{MRbook}.

We next describe a useful coupling. Let $\cP_1$ and $\cP_2$ be two independent Poisson processes of intensity 1 in $\R^d$,
and let $\cP'_1$ and $\cP'_2$ be two independent Poisson processes of intensity $\lambda$, also in $\R^d$. Given $R>0$, construct
two more processes $\cP_3$ and $\cP'_3$ as follows. Outside $B(O,3R)$, let $\cP_3$ and $\cP'_3$ coincide with $\cP_1$ and $\cP'_1$
respectively. Inside $B(O,3R)$, for $\cP_3$, include each point of $\cP_1\cup\cP_2$ with probability $\tfrac12$, and for
$\cP'_3$, include each point of $\cP'_1\cup\cP'_2$ with probability $\tfrac12$. Then $\cP_3$ and $\cP'_3$ are both Poisson
processes in $\R^d$, of intensities 1 and $\lambda$, respectively. This coupling will be referred to, following~\cite{HaMe},
as the {\em special coupling}. It shows that, if an event $E$ occurs for an instance $(\cP_1,\cP_1')$, and if an event $F$
can be made to occur by removing some points of $(\cP_1,\cP_1')$ inside $B(O,3R)$, then $\Prb(E)>0\Rightarrow\Prb(F)>0$,
since the modified instance occurs with positive probability for $(\cP_1,\cP_2,\cP_1',\cP'_2)$. Its first application will
be in the proof of the following lemma.

\begin{lemma}\label{01infty}
For each value of $d$, and for each $\lambda>0$, the number of infinite components in the graph $G=G'_{{\rm sec}}$
is either almost surely 0, almost surely 1, or almost surely $\infty$.
\end{lemma}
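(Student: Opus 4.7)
The plan is to follow the Burton--Keane approach. By \Lm{erg}, the number $N$ of infinite components of $G$ is almost surely equal to a fixed constant $k\in\{0,1,2,\ldots,\infty\}$, so it suffices to exclude every finite $k\ge 2$. Assume, for contradiction, that $N=k$ almost surely for some finite $k\ge 2$, and let $E_R$ be the event that at least two distinct infinite components of $G$ meet the ball $B(O,R)$. Since almost surely each of the (finitely many) infinite components is eventually hit by $B(O,R)$ as $R\to\infty$, we have $\Prb(E_R)\to 1$; fix $R$ with $\Prb(E_R)>0$.

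The key step is a local modification delivered by the special coupling. On $E_R$, select black points $x_1\in C_1\cap B(O,R)$ and $x_2\in C_2\cap B(O,R)$ lying in distinct infinite components $C_1,C_2$, and consider the modification that removes every red point of $\cP'_1$ inside $B(O,3R)$ while keeping every black point. Write $G^*$ for the resulting secrecy graph. Two observations drive the conclusion. First, deleting red points only creates edges in $\vec{G}_{\rm sec}$, since the presence of $\vec{uv}$ requires only that $B(u,\|u-v\|)$ contain no red point, a condition monotone in the red set; hence every edge of $G$ survives, and $C_1,C_2$ remain as subsets of (still) infinite components of $G^*$. Second, because $x_1,x_2\in B(O,R)$ we have $\|x_1-x_2\|\le 2R$, so each ball $B(x_i,\|x_1-x_2\|)$ is contained in $B(O,3R)$ and is therefore red-point-free in the modified configuration; consequently both directed edges $\vec{x_1x_2}$ and $\vec{x_2x_1}$ are present, so $x_1x_2$ is a bidirectional edge of $G^*$ and $C_1,C_2$ coalesce into a single infinite component. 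Thus $G^*$ has at most $k-1$ infinite components, and the special coupling transfers this to $\Prb(N<k)>0$ for the original Poisson model, contradicting $N=k$ a.s. So no finite $k\ge 2$ is possible.

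The one point that needs care is verifying that ``remove all red points from $B(O,3R)$'' is an admissible local modification. This is exactly what the special coupling was designed to yield: since $\cP_1\cap B(O,3R)$ and $\cP'_1\cap B(O,3R)$ are almost surely finite, there is a positive-probability coupled realization in which $\cP_3$ coincides with $\cP_1$ throughout and $\cP'_3$ coincides with $\cP'_1\setminus B(O,3R)$; on such realizations $E_R$ holds for $(\cP_1,\cP'_1)$ while the secrecy graph built from $(\cP_3,\cP'_3)$ has fewer than $k$ infinite components, which is the contradiction we need. Beyond this bookkeeping I do not anticipate any serious obstacle -- the real content is the monotonicity observation above, which is specific to the secrecy graph and makes merging infinite components strictly easier than in the $k$-nearest neighbour setting of~\cite{HaMe}.
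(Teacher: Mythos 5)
Your proof is correct and follows essentially the same route as the paper: reduce via \Lm{erg} to ruling out a finite constant $k\ge 2$, then use the special coupling to delete all red points from $B(O,3R)$, which (by monotonicity of the secrecy graph in the red point set) merges infinite components meeting $B(O,R)$ and contradicts the almost-sure constancy of the count. The only cosmetic difference is that the paper merges all $k$ components into exactly one, whereas you merge just two to get at most $k-1$; both suffice.
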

\begin{proof}
By \Lm{erg}, we only have to show that, for each fixed $k\ge 2$, it is not the case that $G$ has, almost surely, exactly $k$ infinite
components. Suppose then, that, for some $k\ge 2$, $G$ has, almost surely, exactly $k$ infinite components. For some $r>0$, the
probability that each of these components $C_1,\ldots, C_k$ intersects $B(O,r)$ is strictly positive. Given some configuration
in which all $k$ infinite components $C_1,\ldots, C_k$ intersect $B(O,r)$, remove all the red points in $B(O,3r)$.  The effect
of this is that the $k$ components $C_1,\ldots,C_k$ merge to form a single infinite component. However, using the special coupling,
this shows that the probability of having a single infinite component is strictly positive, contradicting \Lm{erg}.
\end{proof}

\noindent We need one final technical lemma.

\begin{lemma}\label{l:final}
For sufficiently large $r$, the probability that there is any point of $\cP\setminus B(O,4r)$ that is closer to some point of
$B(O,3r)$ than to any point of $\cP'\setminus B(O,3r)$ is at most 0.1.
\end{lemma}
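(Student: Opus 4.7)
The plan is to reformulate the event in terms of a void probability for $\cP'$ and then bound the expected number of offending black points using Campbell's formula, closing with Markov's inequality.

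First I would unpack the event. For a point $p\in\cP$ with $\|p\|>4r$, the closest point of $B(O,3r)$ to $p$ lies at distance $\|p\|-3r$ along the ray from $O$, so the condition that $p$ is closer to some point of $B(O,3r)$ than to any red point outside $B(O,3r)$ is equivalent to the open ball $B(p,\|p\|-3r)$ containing no point of $\cP'\setminus B(O,3r)$. The key geometric observation is that this ball is externally tangent to $\partial B(O,3r)$ and hence meets $B(O,3r)$ only at a single point, a null set for $\cP'$. Thus, almost surely, the event is the same as $B(p,\|p\|-3r)\cap\cP'=\emptyset$, which has probability $\exp\bigl(-\lambda\alpha_d(\|p\|-3r)^d\bigr)$ by the independence of $\cP$ and $\cP'$.

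Next I would compute the expected number $E$ of such offending black points. By Campbell's formula applied to $\cP$,
\[
E=\int_{\R^d\setminus B(O,4r)}\exp\bigl(-\lambda\alpha_d(\|p\|-3r)^d\bigr)\,dp.
\]
Passing to polar coordinates and substituting $u=\|p\|-3r$, the polynomial factor $(u+3r)^{d-1}$ is at most $(4u)^{d-1}$ on the domain of integration $u\ge r$, after which the substitution $v=u^d$ reduces matters to an integral of $e^{-\lambda\alpha_d v}$ on $[r^d,\infty)$. This yields an estimate of the form $E\le \tfrac{4^{d-1}}{\lambda}e^{-\lambda\alpha_d r^d}$, which vanishes as $r\to\infty$.

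Finally, Markov's inequality applied to the (integer-valued) count of offending points gives that the probability in question is at most $E$, which is below $0.1$ for $r$ sufficiently large (with the threshold depending on $\lambda$ and $d$). I do not anticipate any genuine obstacle; the only point needing care is the tangency observation that ensures $B(p,\|p\|-3r)$ lies outside $B(O,3r)$ up to a null set, so that the ``$\setminus B(O,3r)$'' qualifier on $\cP'$ can be dropped and the void probability $e^{-\lambda\alpha_d(\|p\|-3r)^d}$ correctly captures the event for each fixed $p$.
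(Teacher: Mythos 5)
Your proposal is correct and follows essentially the same route as the paper: both bound the probability by the expected number of offending black points, write that expectation as $\int_{4r}^{\infty}e^{-\lambda\alpha_d(x-3r)^d}S_dx^{d-1}\,dx$, substitute $y=x-3r$ and use $(y+3r)^{d-1}\le(4y)^{d-1}$ for $y\ge r$, and conclude that the integral can be made below $0.1$ for large $r$. Your version merely adds two details the paper leaves implicit: the tangency observation justifying the void probability, and the explicit closed form $\tfrac{4^{d-1}}{\lambda}e^{-\lambda\alpha_dr^d}$ where the paper only notes convergence.
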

\begin{proof}
We can calculate the expected number of black vertices $v$ at distance at least $4r$ from $O$ whose nearest red point is at
distance more than $\|v\|-3r$ as
\begin{align*}
\int_{4r}^{\infty}e^{-\lambda \alpha_d(x-3r)^d}S_dx^{d-1}\,dx&=\int_{r}^{\infty}e^{-\lambda \alpha_dy^d}S_d(y+3r)^{d-1}\,dy\\
&\le \int_{r}^{\infty}e^{-\lambda \alpha_dy^d}S_d(4y)^{d-1}\,dy
\end{align*}
where $S_d=2\pi^{d/2}/\Gamma(d/2)$ and $\alpha_d=\pi^{d/2}/\Gamma(1+d/2)$ are the surface area and volume
respectively of a unit $d$ dimensional ball. The last integrand above is a polynomial times a (super-)
exponentially decreasing function, so the integral converges. Hence the integral can be made less than $0.1$
by suitable choice of $r$, and consequently so can the probability in the statement of the lemma.
\end{proof}

\noindent We are now ready for our final theorem.

\begin{theorem}\label{t:unique}
For each value of $d$, and for each $\lambda>0$, the number of infinite components in the graph $G=G'_{{\rm sec}}$ is either almost
surely 0, or almost surely 1.
\end{theorem}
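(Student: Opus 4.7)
The plan is a Burton--Keane trifurcation argument. Assuming for contradiction that $G$ has almost surely infinitely many infinite components --- the only case not already ruled out by Lemma~\ref{01infty} --- I would produce the event $T(O,r)$ with positive probability, contradicting \Lm{tri}.

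First, fix $r$ large enough that (i) with positive probability $\eta$, at least three distinct infinite components of $G$ meet $B(O,r)$, and (ii) by \Lm{final}, the ``bad'' event that some point of $\cP\setminus B(O,4r)$ is closer to $B(O,3r)$ than to its nearest red point outside $B(O,3r)$ has probability at most $0.1$. Let $E$ be the intersection of these two events, so that $\Prb(E)>0$. On $E$, apply the special coupling with $R=r$ to delete every red point of $\cP'$ lying in $B(O,3r)$; this modification occurs with positive probability in the coupled process. Pick representatives $v_1,v_2,v_3\in B(O,r)$ from three distinct infinite components $C_1,C_2,C_3$. Since each ball $B(v_i,\|v_i-v_j\|)$ is contained in $B(O,3r)$ and is now free of red points, every pair $v_iv_j$ is a bidirectional edge in the modified graph.

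The good event has the key consequence that, for every $y\in\cP\setminus B(O,4r)$, each out-neighbour $z$ of $y$ in $\vec{G}_{\rm sec}$ satisfies $\|y-z\|<\|y\|-3r$, so $B(y,\|y-z\|)$ is disjoint from $B(O,3r)$; hence the outgoing edges from any such $y$ are untouched by the modification, and the bidirectional subgraph of $G$ induced on $\cP\setminus B(O,4r)$ is preserved. Each $C_i$ therefore contains an infinite connected tail $C_i^\infty\subset C_i\setminus B(O,4r)$ (such tails exist since $C_i$ is infinite and $B(O,4r)$ contains only finitely many points), and the three tails are preserved and pairwise disjoint in this induced subgraph. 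Together with the new edges among $v_1,v_2,v_3$, the three tails are merged into a single infinite component $C$ of the modified graph; removing from $C$ the edges that intersect $B(O,r)$ then disconnects it into at least three infinite sub-components, yielding $T(O,r)$.

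The main obstacle lies in this last step: the modification may create new bidirectional edges joining $\cP\setminus B(O,4r)$ to the annulus $B(O,4r)\setminus B(O,3r)$ (where only the reverse direction of the edge is affected by the removal of reds in $B(O,3r)$), and through a chain of such edges and new connections within the annulus two distinct tails might become joined by a path lying wholly outside $B(O,r)$, which would defeat the trifurcation. I would dispose of this by strengthening \Lm{final}: bounding the expected number of such annular bidirectional edges by an integral of essentially the same form as in the proof of \Lm{final}, which can be made arbitrarily small by taking $r$ larger still. Once this estimate is in place, $\Prb(T(O,r))>0$, contradicting \Lm{tri} and ruling out the case of infinitely many infinite components, so that the number is almost surely $0$ or $1$ as required.
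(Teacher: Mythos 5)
Your overall strategy is exactly the paper's: reduce to the infinitely-many-components case via Lemma~\ref{01infty}, use the special coupling to delete the red points of $B(O,3r)$, invoke \Lm{final} to keep the exterior graph intact, and derive a contradiction with \Lm{tri}. You have also correctly identified the one genuine technical obstacle. The problem is that your proposed repair of that obstacle does not work. The difficulty is not limited to \emph{new bidirectional edges joining $\cP\setminus B(O,4r)$ to the annulus}, which is all your strengthened version of \Lm{final} would control. Deleting every red point of $B(O,3r)$ creates a profusion of new edges \emph{inside} $B(O,3r)\setminus B(O,r)$: any two black points $u,v$ with $B(u,\|u-v\|)\cup B(v,\|u-v\|)\subset B(O,3r)$ become adjacent in the modified $G'_{\rm sec}$, so the black points of, say, $B(O,2r)$ form a densely connected cluster. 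Since $C_1$ and $C_2$ will typically both have points in $B(O,2r)\setminus B(O,r)$ joined by \emph{pre-existing} edges to their respective tails, the two tails become linked by a path that never meets $B(O,r)$. No estimate on exterior-to-annulus edges can prevent this, so $T(O,r)$ need not occur and the contradiction with \Lm{tri} is not obtained.

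The fix is simply to aim for the trifurcation event on the \emph{large} ball: the paper verifies $T(x,4r)$ rather than $T(O,r)$. Removing all edges of $C$ that intersect $B(O,4r)$ removes, in particular, every edge with an endpoint in $B(O,4r)$, hence every new edge created by the modification (on the good event of \Lm{final}, no out-edge of a point of $\cP\setminus B(O,4r)$ is altered, so all new edges have an endpoint in $B(O,4r)$). What survives is the induced subgraph on $\cP\setminus B(O,4r)$ restricted to edges avoiding $B(O,4r)$, which is unchanged by the modification and in which the three tails lie in distinct infinite components; this is exactly $T(x,4r)$, contradicting \Lm{tri}. Note that \Lm{tri} is stated for an arbitrary radius, so nothing is lost. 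The paper also deletes, via the special coupling, all black points of $B(O,4r)$ not belonging to $C_1\cup C_2\cup C_3$, which guarantees cleanly that no other infinite component merges with $C$; you omit this step, and while it is arguably not essential for producing a trifurcation, it is the cleanest way to control which components merge. With the ball $B(O,4r)$ in place of $B(O,r)$ your argument becomes the paper's proof; as written, it has a genuine gap.
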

\begin{proof}
In this proof we may assume that $\lambda>\lambda_c$, so that there is at least one infinite component, almost surely.

Suppose that, almost surely, $G$ has infinitely many infinite components. Then there exists an $r>0$ such that, with probability
at least 0.99, at least three infinite components intersect $B(O,r)$. \Lm{final} implies that $G\cap B(O,4r)^{\mathrm{c}}$ is
unaffected by the red points inside $B(O,3r)$. Now let $C_1,C_2$ and $C_3$ be three of the infinite
components intersecting $B(O,r)$. First, remove all black points not in these components from inside $B(O,4r)$. Second,
remove all the red points from $B(O,3r)$. The effect of this is that $C_1,C_2$ and $C_3$ merge into a single infinite
component $C$, while none of the other infinite components merge with $C$. But, in the new configuration, which has
positive probability of occurring (by the special coupling), $T(x,4r)$ occurs. This contradicts \Lm{tri}.
\end{proof}

\section{Concluding Remarks}

We have presented several methods to calculate bounds on five percolation thresholds in the Poisson secrecy graph.
While the rigorous bounds are still rather loose, the high-confidence lower bounds derived here are much tighter.

\section*{Acknowledgments}

The work of the second author was in part supported by the U.S.~NSF (grants CCF 728763, CNS 1016742)
and the DARPA/IPTO IT-MANET program (grant W911NF-07-1-0028).

\end{document}